\documentclass[12pt, a4paper]{amsart}


\usepackage[margin=2.2cm]{geometry}
\usepackage{marginnote}
\usepackage{float}
\usepackage[final]{graphicx}
\usepackage{epstopdf} 
\usepackage{caption}
\usepackage{subcaption}

\usepackage[T1]{fontenc}
\usepackage{mathrsfs}
\usepackage{mathtools}

\usepackage{tikz}
\usetikzlibrary{decorations.markings}
\usetikzlibrary{decorations.pathreplacing}
\usetikzlibrary{arrows,shapes,positioning,patterns}
\usetikzlibrary{knots}
\tikzstyle{none}=[inner sep=0pt]
\pgfdeclarelayer{edgelayer}
\pgfdeclarelayer{nodelayer}
\pgfsetlayers{edgelayer,nodelayer,main}
\usepackage{url}

\usepackage{verbatim}

\newcommand{\bR}{\mathbb R}

\newcommand{\bC}{\mathbb C}

\newcommand{\bZ}{\mathbb Z}

\newcommand{\Mat}{{\rm Mat}_{X}(\bC)}

\newcommand{\al}{\alpha}

\newcommand{\ep}{\varepsilon}
\newcommand{\si}{\sigma}
\newcommand{\om}{\omega}

\newcommand{\Ga}{\Gamma}

\newcommand{\ze}{\zeta}

\newcommand{\x}{\times}

\newcommand{\co}{\thinspace\colon}

\newcommand*{\leftcross}{%
	\raisebox{-5pt}{%
		\begin{tikzpicture}[scale=0.3, every path/.style ={very thick}, 
		every node/.style={knot crossing, inner sep = 2pt}]
		\node (l1) at (-1,-1) {};
		\node (l2) at (-1,1) {};
		\node (r1) at (1,-1) {};
		\node (r2) at (1,1) {};
		\node (m) at (0,0) {};
		\draw (l1.center) -- (m) {};
		\draw (m) -- (r2.center) {};
		\draw (l2.center) -- (r1.center);
		\end{tikzpicture}}
}

\newcommand*{\horres}{%
	\raisebox{-5pt}{%
		\begin{tikzpicture}[scale=0.3, every path/.style ={very thick}, 
		every node/.style={knot crossing, inner sep = 2pt}]
		\node (l1) at (-1,-1) {};
		\node (l2) at (-1,1) {};
		\node (r1) at (1,-1) {};
		\node (r2) at (1,1) {};
		\node (m) at (0,0) {};
		\draw (l2.center) .. controls (l2.2 south east) and (r2.2 south west) .. (r2.center) {};
		\draw (l1.center) .. controls (l1.2 north east) and (r1.2 north west) .. (r1.center) {};
		\end{tikzpicture}}
}

\newcommand*{\vertres}{%
	\raisebox{-5pt}{%
		\begin{tikzpicture}[scale=0.3, every path/.style ={very thick}, 
		every node/.style={knot crossing, inner sep = 2pt}]
		\node (l1) at (-1,-1) {};
		\node (l2) at (-1,1) {};
		\node (r1) at (1,-1) {};
		\node (r2) at (1,1) {};
		\node (m) at (0,0) {};
		\draw (l2.center) .. controls (l2.2 south east) and (l1.2 north east) .. (l1.center) {};
		\draw (r2.center) .. controls (r2.2 south west) and (r1.2 north west) .. (r1.center) {};
		\end{tikzpicture}}
}

\newcommand*{\leftcrossaxis}{%
	\raisebox{-5pt}{%
		\begin{tikzpicture}[scale=0.3, every path/.style ={very thick}, 
		every node/.style={knot crossing, inner sep = 2pt}]
		\node (l1) at (-1,-1) {};
		\node (l2) at (-1,1) {};
		\node (r1) at (1,-1) {};
		\node (r2) at (1,1) {};
		\node (m) at (0,0) {};
		\draw (l1.center) -- (m) {};
		\draw (m) -- (r2.center) {};
		\draw (l2.center) -- (r1.center);
		\draw[densely dashed, red] (0,-1) -- (0,1); 
		\end{tikzpicture}}
}

\newcommand*{\rightcrossaxis}{%
	\raisebox{-5pt}{%
		\begin{tikzpicture}[scale=0.3, every path/.style ={very thick}, 
		every node/.style={knot crossing, inner sep = 2pt}]
		\node (l1) at (-1,-1) {};
		\node (l2) at (-1,1) {};
		\node (r1) at (1,-1) {};
		\node (r2) at (1,1) {};
		\node (m) at (0,0) {};
		\draw (l1.center) -- (r2.center) {};
		\draw (l2.center) -- (m) {};
		\draw (m) -- (r1.center);
		\draw[densely dashed, red] (0,-1) -- (0,1){}; 
		\end{tikzpicture}}
}

\newcommand*{\horresaxis}{%
	\raisebox{-5pt}{%
		\begin{tikzpicture}[scale=0.3, every path/.style ={very thick}, 
		every node/.style={knot crossing, inner sep = 2pt}]
		\node (l1) at (-1,-1) {};
		\node (l2) at (-1,1) {};
		\node (r1) at (1,-1) {};
		\node (r2) at (1,1) {};
		\node (m) at (0,0) {};
		\draw (l2.center) .. controls (l2.2 south east) and (r2.2 south west) .. (r2.center) {};
		\draw (l1.center) .. controls (l1.2 north east) and (r1.2 north west) .. (r1.center) {};
		\draw[densely dashed, red] (0,-1) -- (0,1){}; 
		\end{tikzpicture}}
}

\newcommand*{\vertresaxis}{%
	\raisebox{-5pt}{%
		\begin{tikzpicture}[scale=0.3, every path/.style ={very thick}, 
		every node/.style={knot crossing, inner sep = 2pt}]
		\node (l1) at (-1,-1) {};
		\node (l2) at (-1,1) {};
		\node (r1) at (1,-1) {};
		\node (r2) at (1,1) {};
		\node (m) at (0,0) {};
		\draw (l2.center) .. controls (l2.2 south east) and (l1.2 north east) .. (l1.center) {};
		\draw (r2.center) .. controls (r2.2 south west) and (r1.2 north west) .. (r1.center) {};
		\draw[densely dashed, red] (0,-1) -- (0,1){}; 
		\end{tikzpicture}}
}

\newtheorem{thm}{Theorem}[section]
\newtheorem{lemma}[thm]{Lemma}
\newtheorem{cor}[thm]{Corollary}
\newtheorem{rem}[thm]{Remark}
\newtheorem{prop}[thm]{Proposition}

\newcommand*\sm[1]{\left(\begin{smallmatrix}#1\end{smallmatrix}\right)}
\newcommand*\mat[1]{\begin{pmatrix}#1\end{pmatrix}}

\theoremstyle{definition}

\newtheorem{defns}[thm]{Definitions}
\newtheorem{rmk}[thm]{Remark}

\newtheorem*{exas}{Examples}

\numberwithin{equation}{section}


\begin{document}

\title{Symmetric union diagrams and refined spin models}

\author{Carlo Collari}
\address{Mathematical Sciences, Durham University, UK}
\email{carlo.collari.math@gmail.com}
\author{Paolo Lisca}
\address{Department of Mathematics, University of Pisa, ITALY} 
\email{paolo.lisca@unipi.it}
\subjclass[2010]{57M27 (57M25)}
\begin{abstract} 
An open question akin to the slice-ribbon conjecture asks whether every ribbon knot can be represented as a symmetric union. Next to this basic existence question sits the question of uniqueness of such representations. Eisermann and Lamm investigated the latter question by introducing a notion of symmetric equivalence among symmetric union diagrams and showing that inequivalent diagrams can be detected using a refined version of the Jones polynomial. We prove that every topological spin model gives rise to many effective invariants of symmetric equivalence, which can be used to distinguish infinitely many symmetric union diagrams representing the same link. We also show that such invariants are distinct from the refined Jones polynomial and we use them to provide a partial answer to a question left open by Eisermann and Lamm.
\end{abstract}

\maketitle

\section{Introduction}\label{s:intro}

\subsection{Symmetric diagrams and symmetric equivalences}\label{ss:sdse}

Let $\rho\co\bR^2\to\bR^2$ be the reflection given by $\rho(x,y)=(-x,y)$. The map $\rho$ fixes 
pointwise the subset $B =\{0\}\x\bR\subset\bR^2$, which will be called the \emph{axis}. 
Two diagrams $D,D'\subset\bR^2$ will be considered identical if there is an orientation-preserving 
diffeomorphism $h\co\bR^2\to\bR^2$ such that $h\circ\rho=\rho\circ h$ and $h(D)=D'$. 

An oriented link diagram $D\subset\bR^2$ is~\emph{symmetric} if $\rho(D) = \bar{D}$, where
$\bar{D}$ is the oriented diagram obtained from $D$ by reversing the orientation and 
switching all the crossings on the axis. 
A symmetric diagram $D$ is a~\emph{symmetric union} if $\rho$ sends each component $c_D$ of $D$ 
to itself in an orientation-reversing fashion, implying that $c_D$ crosses the 
axis perpendicularly in exactly two non--crossing points. Figure~\ref{f:89amphi} shows 
two unoriented symmetric union diagrams of the amphicheiral knot $8_9$. 
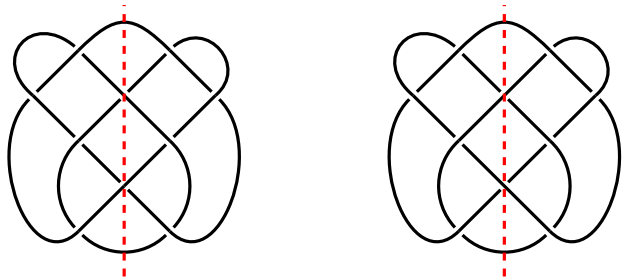
\begin{figure}[ht]
	\centering
	\begin{tikzpicture}[scale = 0.2, every path/.style ={very thick}] 
	\begin{pgfonlayer}{nodelayer}
		\node [style=none] (0) at (3, 0) {};
		\node [style=none] (1) at (0, -3) {};
		\node [style=none] (2) at (-3, 0) {};
		\node [style=none] (3) at (3, -6) {};
		\node [style=none] (4) at (-3, -6) {};
		\node [style=none] (5) at (0, 3) {};
		\node [style=none] (6) at (3, 6) {};
		\node [style=none] (7) at (-2.75, 5.75) {};
		\node [style=none] (8) at (6, 3) {};
		\node [style=none] (9) at (-6, 3) {};
		\node [style=none] (10) at (-3, 6) {};
		\node [style=none] (11) at (-0.25, 2.75) {};
		\node [style=none] (12) at (0.25, 3.25) {};
		\node [style=none] (13) at (2.75, -0.25) {};
		\node [style=none] (14) at (3.25, 0.25) {};
		\node [style=none] (15) at (0.25, -3.25) {};
		\node [style=none] (16) at (-0.25, -2.75) {};
		\node [style=none] (17) at (-2.75, -0.25) {};
		\node [style=none] (18) at (-3.25, 0.25) {};
		\node [style=none] (19) at (-3.25, 6.25) {};
		\node [style=none] (20) at (2.75, 5.75) {};
		\node [style=none] (21) at (3.25, 6.25) {};
		\node [style=none] (22) at (5.75, 3.25) {};
		\node [style=none] (23) at (6.25, 2.75) {};
		\node [style=none] (24) at (2.75, -6.25) {};
		\node [style=none] (25) at (3.25, -5.75) {};
		\node [style=none] (26) at (-2.75, -6.25) {};
		\node [style=none] (27) at (-3.25, -5.75) {};
		\node [style=none] (28) at (-6.25, 2.75) {};
		\node [style=none] (29) at (-5.75, 3.25) {};
	\end{pgfonlayer}
	\begin{pgfonlayer}{edgelayer}
		\draw (7.center) to (0.center);
		\draw [bend left=90, looseness=2.25] (9.center) to (19.center);
		\draw (29.center) to (10.center);
		\draw [bend left=45, looseness=1.50] (10.center) to (6.center);
		\draw (12.center) to (20.center);
		\draw (11.center) to (2.center);
		\draw (18.center) to (9.center);
		\draw (17.center) to (16.center);
		\draw (13.center) to (1.center);
		\draw (14.center) to (8.center);
		\draw [bend right=90, looseness=1.75] (8.center) to (21.center);
		\draw (6.center) to (22.center);
		\draw [bend left=45, looseness=1.00] (0.center) to (25.center);
		\draw (15.center) to (3.center);
		\draw [in=-45, out=-45, looseness=1.25] (3.center) to (23.center);
		\draw [bend right=45, looseness=1.00] (2.center) to (27.center);
		\draw [bend right=45, looseness=1.00] (26.center) to (24.center);
		\draw (1.center) to (4.center);
		\draw [in=-135, out=-135, looseness=1.25] (28.center) to (4.center);
		\draw[dashed, red] (0,-9) -- (0,9); 
	\end{pgfonlayer}
\begin{scope}[xshift = 25cm]
\begin{pgfonlayer}{nodelayer}
\node [style=none] (0) at (3, 0) {};
\node [style=none] (1) at (-0.25, -3.25) {};
\node [style=none] (2) at (-3, 0) {};
\node [style=none] (3) at (3, -6) {};
\node [style=none] (4) at (-3, -6) {};
\node [style=none] (5) at (3, 6) {};
\node [style=none] (6) at (-2.75, 5.75) {};
\node [style=none] (7) at (6, 3) {};
\node [style=none] (8) at (-6, 3) {};
\node [style=none] (9) at (-3, 6) {};
\node [style=none] (10) at (2.75, 5.75) {};
\node [style=none] (11) at (2.75, -0.25) {};
\node [style=none] (12) at (3.25, 0.25) {};
\node [style=none] (13) at (-2.75, -0.25) {};
\node [style=none] (14) at (-3.25, 0.25) {};
\node [style=none] (15) at (-3.25, 6.25) {};
\node [style=none] (16) at (2.75, 5.75) {};
\node [style=none] (17) at (3.25, 6.25) {};
\node [style=none] (18) at (5.75, 3.25) {};
\node [style=none] (19) at (6.25, 2.75) {};
\node [style=none] (20) at (2.75, -6.25) {};
\node [style=none] (21) at (3.25, -5.75) {};
\node [style=none] (22) at (-2.75, -6.25) {};
\node [style=none] (23) at (-3.25, -5.75) {};
\node [style=none] (24) at (-6.25, 2.75) {};
\node [style=none] (25) at (-5.75, 3.25) {};
\node [style=none] (26) at (0.25, 2.75) {};
\node [style=none] (27) at (-0.25, 3.25) {};
\node [style=none] (28) at (0.25, -2.75) {};
\end{pgfonlayer}
\begin{pgfonlayer}{edgelayer}
\draw [bend left=90, looseness=2.25] (8.center) to (15.center);
\draw (25.center) to (9.center);
\draw [bend left=45, looseness=1.50] (9.center) to (5.center);
\draw (10.center) to (2.center);
\draw (14.center) to (8.center);
\draw (12.center) to (7.center);
\draw [bend right=90, looseness=1.75] (7.center) to (17.center);
\draw (5.center) to (18.center);
\draw [bend left=45, looseness=1.00] (0.center) to (21.center);
\draw (13.center) to (3.center);
\draw [in=-45, out=-45, looseness=1.25] (3.center) to (19.center);
\draw [bend right=45, looseness=1.00] (2.center) to (23.center);
\draw [bend right=45, looseness=1.00] (22.center) to (20.center);
\draw (1.center) to (4.center);
\draw [in=-135, out=-135, looseness=1.25] (24.center) to (4.center);
\draw (6.center) to (27.center);
\draw (26.center) to (0.center);
\draw (28.center) to (11.center);
\draw[dashed, red] (0,-9) -- (0,9); 
\end{pgfonlayer}
\end{scope}
\end{tikzpicture}
	\caption{Symmetric union diagrams of the knot $8_9$}
	\label{f:89amphi}
\end{figure}
The two diagrams are obtained from each other by switching all the crossings on the axis, which 
amounts to reflecting across the plane of the page and then applying a 3-dimensional $180^0$ rotation around the axis. Eisermann and Lamm~\cite[\S 2.4]{Ei.La11} observe that to each symmetric diagram one can associate 
a singular link $L\subset\bR^3$ with some extra data.  
This is done by converting each crossing on the axis into a double point belonging to the plane 
$E = \{x=0\}\subset\bR^3$, and encoding the over-under crossing information by a sign 
attached to the double point according to the rules of Figure~\ref{f:double-points}. 
\begin{figure}[ht]
	\centering	
		\begin{tikzpicture}[scale=0.4, every path/.style ={very thick}, 
	every node/.style={knot crossing, inner sep=2pt}] 
	\node (l-1) at (-1,-1){};
	\node (l1) at (-1,1){};
	\node (m) at (0,0){};
	\node (r-1) at (1,-1){};
	\node (r1) at (1,1){};
	\draw (l1.center) -- (r-1.center){};
	\draw (l-1.center) -- (m){};
	\draw (m) -- (r1.center){};
	\draw[dashed, red] (0,-1.5) -- (0,1.5);
	\begin{scope}[xshift=3cm]
	\draw[->] (-1,0) -- (1,0){};
	\end{scope}
	\begin{scope}[xshift=6cm]
	\node (l-1) at (-1,-1){};
	\node (l1) at (-1,1){};
	\node at (1,0){$-$};
	\node (r-1) at (1,-1){};
	\node (r1) at (1,1){};
	\draw (l1.center) -- (r-1.center){};
	\draw (l-1.center) -- (r1.center){};
	\filldraw[fill=black] (0,0) circle (5pt){};	
	\draw[dashed, red] (0,-1.5) -- (0,1.5);
	\end{scope}
	\begin{scope}[xshift=12cm]
	\node (l-1) at (-1,-1){};
	\node (l1) at (-1,1){};
	\node (m) at (0,0){};
	\node (r-1) at (1,-1){};
	\node (r1) at (1,1){};
	\draw (l-1.center) -- (r1.center){};
	\draw (l1.center) -- (m){};
	\draw (m) -- (r-1.center){};
	\draw[dashed, red] (0,-1.5) -- (0,1.5);
	\end{scope}
	\begin{scope}[xshift=15cm]
	\draw[->] (-1,0) -- (1,0){};
	\end{scope}
	\begin{scope}[xshift=18cm]
	\node (l-1) at (-1,-1){};
	\node (l1) at (-1,1){};
	\node at (1,0){$+$};
	\node (r-1) at (1,-1){};
	\node (r1) at (1,1){};
	\draw (l1.center) -- (r-1.center){};
	\draw (l-1.center) -- (r1.center){};
	\filldraw[fill=black] (0,0) circle (5pt){};	
	\draw[dashed, red] (0,-1.5) -- (0,1.5);
	\end{scope}
	\end{tikzpicture}
	\caption{How to turn a crossing on the axis into a signed double point}
	\label{f:double-points}
\end{figure}
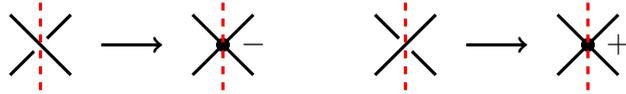  
The resulting singular link with signs, transverse to $E$ and invariant under reflection 
with respect to $E$, is what we call a~\emph{symmetric singular link}. We say that two 
symmetric diagrams are \emph{strongly symmetrically equivalent} if their associated  
symmetric singular links can be connected via a smooth family of symmetric singular links. 
Eisermann and Lamm~\cite[Theorem~2.12]{Ei.La11} show that symmetric diagrams satisfy 
a symmetric version of the Reidemeister theorem, where the symmetric analogues of 
the Reidemeister moves relating two symmetric diagrams are defined as follows.

A~\emph{symmetric Reidemeister move off the axis} is an ordinary Reidemeister move 
carried out, away from the axis $B$, together with its mirror-symmetric counterpart with respect to 
$B$. A~\emph{symmetric Reidemeister move on the axis} is one of the moves 
$S2(h)$, $S2(\pm)$, $S3(o\pm)$, $S3(u\pm)$ and 
$S4(\pm\pm)$, some of which are illustrated in Figure~\ref{f:sRm} (see~\cite[\S 2.3]{Ei.La11}
for the complete list).
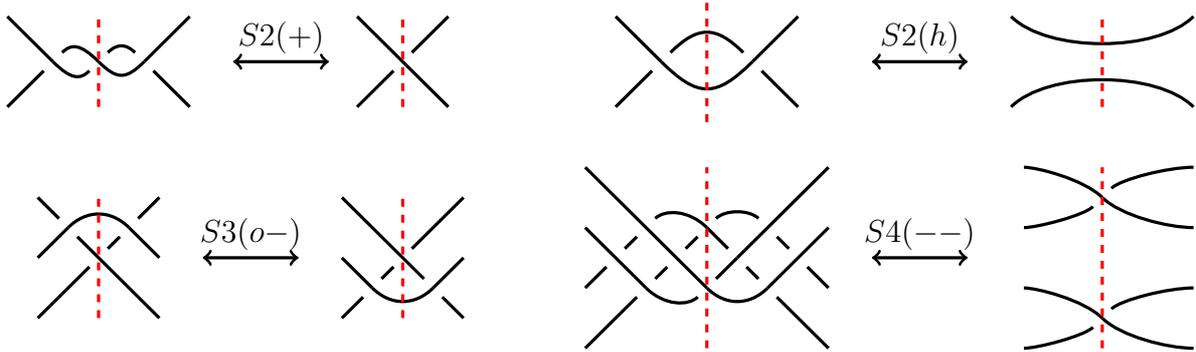
\begin{figure}[ht]
	\centering
	\begin{tikzpicture}[scale=0.4, every path/.style ={very thick}, 
every node/.style={knot crossing, inner sep = 3pt}] 
\node (l1) at (-2,-2) {};
\node (l2) at (-2,0) {};
\node (l3) at (-1,1) {};
\node (l4) at (-2,2) {};
\node (r1) at (2,-2) {};
\node (r2) at (2,0) {};
\node (r3) at (1,1) {};
\node (r4) at (2,2) {};
\node (m) at (0,0) {};
\draw (l1.center) -- (m) {};
\draw (m) -- (r3) {};
\draw (l3) -- (r1.center);
\draw (l4.center) -- (l3) {};
\draw (r3) -- (r4.center) {};
\draw (l2.center) .. controls (l2.2 north east) and (l3.2 south west) .. (l3.center);
\draw (l3.center) .. controls (l3.2 north east) and (r3.2 north west) .. (r3.center);
\draw (r3.center) .. controls (r3.2 south east) and (r2.2 north west) .. (r2.center);
\draw[dashed, red] (0,-2) -- (0,2); 
\begin{scope}[xshift=5cm]
\node (l) at (-2,0) {};
\node (r) at (2,0) {};
\node at (0,0.8) {$S3(o-)$};
\draw[<->] (l) -- (r);
\end{scope}
\begin{scope}[xshift=10cm]
\node (l1) at (-2,-2) {};
\node (l2) at (-1,-1) {};
\node (l3) at (-2,0) {};
\node (l4) at (-2,2) {};
\node (r1) at (2,-2) {};
\node (r2) at (1,-1) {};
\node (r3) at (2,0) {};
\node (r4) at (2,2) {};
\node (m) at (0,0) {};
\draw (l1.center) -- (l2) {};
\draw (m) -- (r4.center) {};
\draw (r2) -- (r1.center);
\draw (l2) -- (m) {};
\draw (l4.center) -- (r2) {};
\draw (l3.center) .. controls (l3.2 south east) and (l2.2 north west) .. (l2.center);
\draw (l2.center) .. controls (l2.2 south east) and (r2.2 south west) .. (r2.center);
\draw (r2.center) .. controls (r2.2 north east) and (r3.2 south west) .. (r3.center);
\draw[dashed, red] (0,-2) -- (0,2); 	
\end{scope}
\begin{scope}[yshift=5cm]
\node (l1) at (-3,0) {};
\node (l2) at (-1.5,1.5) {};
\node (l3) at (-3,3) {};
\node (m) at (0,1.5) {};
\node (r1) at (3,0) {};
\node (r2) at (1.5,1.5) {};
\node (r3) at (3,3) {};
\draw (l1.center) .. controls (l1.2 north east) and (l2.2 south west) .. (l2);
\draw (l2.center) .. controls (l2.2 north west) and (l3.2 south east) .. (l3.center);
\draw (r1.center) .. controls (r1.2 north west) and (r2.2 south east) .. (r2);
\draw (r2.center) .. controls (r2.2 north east) and (r3.2 south west) .. (r3.center);
\draw (l2.center) .. controls (l2.2 south east) and (m.2 south west) .. (m);
\draw (l2) .. controls (l2.2 north east) and (m.2 north west) .. (m.center);
\draw (r2.center) .. controls (r2.2 south west) and (m.2 south east) .. (m.center);
\draw (r2) .. controls (r2.2 north west) and (m.2 north east) .. (m);
\draw[dashed, red] (0,0) -- (0,3); 
\end{scope}
\begin{scope}[xshift=6cm,yshift=5cm]
\node (l) at (-2,1.5) {};
\node (r) at (2,1.5) {};
\node at (0,2.3) {$S2(+)$};
\draw[<->] (l) -- (r);
\end{scope}
\begin{scope}[xshift=10cm,yshift=5cm]
\node (l1) at (-1.5,0) {};
\node (l3) at (-1.5,3) {};
\node (m) at (0,1.5) {};
\node (r1) at (1.5,0) {};
\node (r3) at (1.5,3) {};
\draw (l1.center) -- (m);
\draw (l3.center) -- (r1.center);
\draw (m) -- (r3.center);
\draw[dashed, red] (0,0) -- (0,3); 
\end{scope}
\begin{scope}[xshift=20cm]
\node (l4-3) at (-4,-3) {};
\node (l4-1) at (-4,-1) {};
\node (l41) at (-4,1) {};
\node (l43) at (-4,3) {};
\node (l30) at (-3,0) {};
\node (l2-1) at (-2,-1) {};
\node (l21) at (-2,1) {};
\node (l10) at (-1,0) {};
\node (m-2) at (0,-1) {};
\node (m2) at (0,1) {};
\node (r4-3) at (4,-3) {};
\node (r4-1) at (4,-1) {};
\node (r41) at (4,1) {};
\node (r43) at (4,3) {};
\node (r30) at (3,0) {};
\node (r2-1) at (2,-1) {};
\node (r21) at (2,1) {};
\node (r10) at (1,0) {};
\draw (l4-3.center) -- (l2-1){};
\draw (l4-1.center) -- (l30){};
\draw (l41.center) -- (l2-1.center){};
\draw (l43.center) -- (l10.center){};
\draw (l30) -- (l21){};
\draw (l2-1) -- (l10){};
\draw (l2-1.center) .. controls (l2-1.2 south east) and (m-2.2 south west) .. (m-2){};
\draw (l10.center) .. controls (l10.2 south east) and (m-2.2 north west) .. (m-2.center){};
\draw (l21) .. controls (l21.2 north east) and (m2.2 north west) .. (m2.center){};
\draw (l10) .. controls (l10.2 north east) and (m2.2 south west) .. (m2){};
\draw (m2.center) .. controls (m2.2 south east) and (r10.2 north west) .. (r10){};
\draw (m2) .. controls (m2.2 north east) and (r21.2 north west) .. (r21){};
\draw (m-2.center) .. controls (m-2.2 south east) and (r2-1.2 south west) .. (r2-1.center){};
\draw (m-2) .. controls (m-2.2 north east) and (r10.2 south west) .. (r10.center) {};
\draw (r10.center) -- (r43.center){};
\draw (r10) -- (r2-1){};
\draw (r2-1.center) -- (r41.center){};
\draw (r21) -- (r30){};
\draw (r30) -- (r4-1.center){};
\draw (r2-1) -- (r4-3.center){};
\draw[dashed, red] (0,-3) -- (0,3){};
\end{scope}
\begin{scope}[xshift=27cm]
\node (l) at (-2,0) {};
\node (r) at (2,0) {};
\node at (0,0.8) {$S4(--)$};
\draw[<->] (l) -- (r);
\end{scope}
\begin{scope}[xshift=33cm]
\node (l3-3) at (-3,-3){};
\node (l3-1) at (-3,-1){};
\node (l31) at (-3,1){};
\node (l33) at (-3,3){};
\node (m-2) at (0,-2){};
\node (m2) at (0,2){};
\node (r3-3) at (3,-3){};
\node (r3-1) at (3,-1){};
\node (r31) at (3,1){};
\node (r33) at (3,3){};
\draw (l33) .. controls (l33.2 east) and (m2.2 north west) .. (m2.center){}; 
\draw (l31) .. controls (l31.2 east) and (m2.2 south west) .. (m2){}; 
\draw (l3-1) .. controls (l3-1.2 east) and (m-2.2 north west) .. (m-2.center){}; 
\draw (l3-3) .. controls (l3-3.2 east) and (m-2.2 south west) .. (m-2){}; 
\draw (m-2.center) .. controls (m-2.2 south east) and (r3-3.2 west) .. (r3-3.center){};
\draw (m-2) .. controls (m-2.2 north east) and (r3-1.2 west) .. (r3-1.center){};
\draw (m2.center) .. controls (m2.2 south east) and (r31.2 west) .. (r31.center){};
\draw (m2) .. controls (m2.2 north east) and (r33.2 west) .. (r33.center){};
\draw[dashed, red] (0,-3) -- (0,3);
\end{scope}
\begin{scope}[xshift=20cm,yshift=5cm]
\node (l1) at (-3,0) {};
\node (l2) at (-1.5,1.5) {};
\node (l3) at (-3,3) {};
\node (r1) at (3,0) {};
\node (r2) at (1.5,1.5) {};
\node (r3) at (3,3) {};
\draw (l1.center) .. controls (l1.2 north east) and (l2.2 south west) .. (l2);
\draw (l2.center) .. controls (l2.2 north west) and (l3.2 south east) .. (l3.center);
\draw (r1.center) .. controls (r1.2 north west) and (r2.2 south east) .. (r2);
\draw (r2.center) .. controls (r2.2 north east) and (r3.2 south west) .. (r3.center);
\draw (l2.center) .. controls (l2.4 south east) and (r2.4 south west) .. (r2.center);
\draw (l2) .. controls (l2.4 north east) and (r2.4 north west) .. (r2);
\draw[dashed, red] (0,-0.5) -- (0,3.5); 
\end{scope}
\begin{scope}[xshift=27cm,yshift=5cm]
\node (l) at (-2,1.5) {};
\node (r) at (2,1.5) {};
\node at (0,2.3) {$S2(h)$};
\draw[<->] (l) -- (r);
\end{scope}
\begin{scope}[xshift=33cm,yshift=5cm]
\node (l1) at (-3,0) {};
\node (l3) at (-3,3) {};
\node (r1) at (3,0) {};
\node (r3) at (3,3) {};
\draw (l1.center) .. controls (l1.4 north east) and (r1.4 north west) .. (r1.center);
\draw (l3.center) .. controls (l3.4 south east) and (r3.4 south west) .. (r3.center);
\draw[dashed, red] (0,0) -- (0,3); 
\end{scope}
\end{tikzpicture}
	\caption{Representative symmetric Reidemeister moves}
	\label{f:sRm}
\end{figure} 
It is understood that these moves admit variants obtained by 
turning the corresponding pictures upside down, mirroring or rotating them around the axis. 
With our present terminology, Eisermann and Lamm prove the following. 
\begin{thm}[Symmetric Reidemeister Theorem {{\cite[Theorem~2.12]{Ei.La11}}}]\label{t:sRm}
	Two symmetric diagrams are strongly symmetrically equivalent if and only if they 
	can be obtained from each other via a finite sequence of symmetric Reidemeister moves. 
	\footnote{Eisermann and Lamm state only one of the two implications of Theorem~\ref{t:sRm}, but they 
		use a terminology slightly different from ours and they include the moves $S1$ and $S2(v)$
		of Figure~\ref{f:extrasRm} among their symmetric Reidemeister moves. It can be easily checked that in our terminology and 
		for the set of moves of Figure~\ref{f:sRm}, Theorem~2.12 from~\cite{Ei.La11} is equivalent to Theorem~\ref{t:sRm} (cf.~\cite[Remark~2.13]{Ei.La11}).} 
\end{thm}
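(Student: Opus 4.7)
The theorem has two directions, and I would treat them asymmetrically.

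For the easy direction, that every symmetric Reidemeister move yields a strong symmetric equivalence, I would proceed move by move, exhibiting an explicit equivariant one-parameter family of symmetric singular links that realizes the move. For moves off the axis this is immediate: take the standard smooth realization of the ordinary Reidemeister move and couple it with its $\rho$-conjugate, applied in a small bi-disk disjoint from the mirror disk. For moves on the axis ($S2(h)$, $S2(\pm)$, $S3(o\pm)$, $S3(u\pm)$, $S4(\pm\pm)$) I would write down local $E$-invariant parametric models and check that they are smooth families of symmetric singular links, passing through exactly one non-generic instant at the prescribed event.

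For the harder direction, I would mimic the standard proof of Reidemeister's theorem while keeping the $\rho$-symmetry throughout. Given a smooth family $\{L_t\}_{t\in[0,1]}$ of symmetric singular links connecting $L_0$ and $L_1$, a $\rho$-equivariant generic perturbation (in the space of equivariant smooth families) can be arranged so that, for all but finitely many $t$, the projection of $L_t$ parallel to the first coordinate is a symmetric diagram; at the exceptional times a single codimension-one event occurs. Off the axis, such events are ordinary Reidemeister events that by equivariance occur in symmetric pairs, hence are exactly the symmetric Reidemeister moves off the axis. On the axis, the events are constrained by the requirement that $E$-symmetry and transversality to $E$ be preserved on either side, and a local normal-form analysis of the one-parameter $\rho$-equivariant singularities at the axis produces exactly the catalogue $S2(h), S2(\pm), S3(o\pm), S3(u\pm), S4(\pm\pm)$.

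The main obstacle is precisely this last classification: enumerating the equivariant codimension-one events at the axis and checking that each one is generated by, or equals, a move in Figure~\ref{f:sRm}. This requires case analysis on how strands approach $B$, on the sign data at axis double points, and on the relative heights of crossings as they collide on $B$. A secondary technical point is the genericity itself: one must verify that equivariant smooth families are generic within the equivariant class, i.e. that transversality can be achieved without breaking the reflection symmetry; this follows from an equivariant transversality argument applied to the evaluation map of families into the stratified space of symmetric singular links.

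Finally, to reconcile with the statement of~\cite[Theorem~2.12]{Ei.La11} cited in the footnote, I would check that the moves $S1$ and $S2(v)$ of Figure~\ref{f:extrasRm}, used by Eisermann and Lamm but excluded from our list, can each be realized as a short composition of the moves in Figure~\ref{f:sRm}; combined with their result this gives the present formulation and allows invoking their theorem directly for the hard implication, with the above genericity analysis serving as a sanity check of the move list.
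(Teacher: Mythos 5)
The paper does not prove Theorem~\ref{t:sRm} at all: it is quoted from Eisermann--Lamm, and the only content supplied here is the footnote's claim that the present formulation (with the move set of Figure~\ref{f:sRm}) is an easy reformulation of their Theorem~2.12 together with their Remark~2.13. So the part of your proposal that actually interfaces with the available literature is the last paragraph, and that step contains a genuine error. You propose to reconcile the two formulations by showing that the extra moves $S1$ and $S2(v)$ of Figure~\ref{f:extrasRm} are compositions of the moves of Figure~\ref{f:sRm}. They are not, and cannot be: every move in Figure~\ref{f:sRm} preserves the number of crossings on the axis together with their signs, equivalently the signed double points of the associated symmetric singular link (in a smooth family of symmetric singular links the double points stay transverse on $E$, so their number and signs are locally constant). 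This is exactly how the paper argues that the two $10_{42}$ diagrams of Figure~\ref{f:89} are not strongly symmetrically equivalent (four versus two double points), and why Definitions~\ref{d:SE} introduce the strictly coarser relations obtained by adjoining $S1$ and $S2(v)$: an $S1$ move changes the number of axis crossings by one and an $S2(v)$ move by two, so neither is generated by the $sR$ moves. Consequently your plan to ``invoke their theorem directly for the hard implication'' collapses. The correct reconciliation runs the other way: one checks that the subset of Eisermann--Lamm's moves preserving the symmetric singular link data is precisely the set of Figure~\ref{f:sRm}, and that their argument (Theorem~2.12 in combination with Remark~2.13) then yields both implications for this restricted move set; this is the ``easily checked'' equivalence the footnote alludes to.

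Your from-scratch outline for the two directions is the right kind of argument and is in the spirit of Eisermann--Lamm's own proof: realize each move by an explicit $\rho$-equivariant family for the easy direction, and use equivariant general position plus a normal-form classification of codimension-one events at the axis for the hard one. But, as you yourself flag, the classification of axis events and the equivariant transversality statement are the entire substantive content, and they are asserted rather than carried out. As written, the proposal is an outline whose hardest steps remain open, and its one concrete shortcut (deriving $S1$ and $S2(v)$ from the $sR$ moves so as to quote Eisermann--Lamm verbatim) is false for the reason above.
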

Eisermann and Lamm show~\cite[Example~6.8]{Ei.La11} that the two symmetric union 
diagrams of the knot $8_9$ given in Figure~\ref{f:89amphi} are strongly symmetrically equivalent.
On the other hand, they also consider another symmetric union diagram for the knot $8_9$,   
i.e.~the left-most diagram of Figure~\ref{f:89}, as well as the center and right-most diagrams in Figure~\ref{f:89}, which are two symmetric union diagrams for the knot $10_{42}$. 
\begin{figure}[ht]
	\centering
		\begin{tikzpicture}[scale = 0.5, every path/.style ={very thick}, 
	every node/.style={knot crossing, inner sep = 2pt}] 
	\node (l1) at (-1,0) {};
	\node (l2) at (-1,2) {};
	\node (l3) at (-1,3) {};
	\node (l4) at (-1,5) {};
	\node (l5) at (-1,7) {};
	\node (m1) at (0,1) {};
	\node (m2) at (0,4) {};
	\node (m3) at (0,6) {};
	\node (r1) at (1,0) {};
	\node (r2) at (1,2) {};
	\node (r3) at (1,3) {};
	\node (r4) at (1,5) {};
	\node (r5) at (1,7) {};
	\draw (l1) -- (m1);
	\draw (r1) -- (m1.center);
	\draw (m1.center) -- (l2.center);
	\draw (m1) -- (r2.center);
	\draw (l1.center) .. controls (l1.2 south east) and (r1.2 south west) .. (r1.center);
	\draw (l1.center) .. controls (l1.4 north west) and (l2.4 south west) .. (l2);
	\draw (r1.center) .. controls (r1.4 north east) and (r2.4 south east) .. (r2);
	\draw (l2.center) .. controls (l2.2 north west) and (l3.2 south west) .. (l3);
	\draw (r2.center) .. controls (r2.2 north east) and (r3.2 south east) .. (r3);
	\draw (l2) .. controls (l2.2 north east) and (l3.2 south east) .. (l3.center);
	\draw (r2) .. controls (r2.2 north west) and (r3.2 south west) .. (r3.center);
	\draw (l3.center) .. controls (l3.4 north west) and (l4.4 south west) .. (l4);
	\draw (r3.center) .. controls (r3.4 north east) and (r4.4 south east) .. (r4);
	\draw (l3) -- (r4.center);
	\draw (r3) -- (m2);
	\draw (m2) -- (l4.center);
	\draw (l4.center) .. controls (l4.4 north west) and (l5.4 south west) .. (l5);
	\draw (r4.center) .. controls (r4.4 north east) and (r5.4 south east) .. (r5);
	\draw (l4) -- (m3);
	\draw (r4) -- (l5.center);
	\draw (m3) -- (r5.center);
	\draw (l5) .. controls (l5.2 north east) and (r5.2 north west) .. (r5);
	\draw (l1) .. controls (l1.16 south west) and (l5.16 north west) .. (l5.center);
	\draw (r1) .. controls (r1.16 south east) and (r5.16 north east) .. (r5.center);
	\draw[dashed, red] (0,-1) -- (0,8); 
\begin{scope}[xshift=10cm, yshift=2cm]
\node (l1) at (-1.5,-2.3){};
\node (l2) at (-1,-1){};
\node (l3) at (-1,1){};
\node (l4) at (-1,3){};
\node (l5) at (-2,4){};
\node (l6) at (-1,5){};
\node (r1) at (1.5,-2.3){};
\node (r2) at (1,-1){};
\node (r3) at (1,1){};
\node (r4) at (1,3){};
\node (r5) at (2,4){};
\node (r6) at (1,5){};
\node (m1) at (0,-2){};
\node (m2) at (0,0){};
\node (m3) at (0,2){};
\node (m4) at (0,4){};
\draw (l1.center) .. controls (l1.2 east) and (m1.2 south west) .. (m1){};
\draw (l1.center) .. controls (l1.8 west) and (l5.8 south west) .. (l5){};
\draw (l1) .. controls (l1.4 south) and (r1.4 south) .. (r1){};
\draw (l1) .. controls (l1.2 north) and (l2.2 south west) .. (l2.center){};
\draw (r1.center) .. controls (r1.2 west) and (m1.2 south east) .. (m1.center){};
\draw (r1.center) .. controls (r1.8 east) and (r5.8 south east) .. (r5){};
\draw (r1) .. controls (r1.2 north) and (r2.2 south east) .. (r2.center){};
\draw (m1.center) .. controls (m1.2 north west) and (l2.2 south east) .. (l2){};
\draw (m1) .. controls (m1.2 north east) and (r2.2 south west) .. (r2){};
\draw (l2) .. controls (l2.4 north west) and (l3.4 south west) .. (l3.center){};
\draw (l2.center) .. controls (l2.2 north east) and (m2.2 south west) .. (m2.center){};
\draw (m2) .. controls (m2.2 north west) and (l3.2 south east) .. (l3){};
\draw (m2.center) .. controls (m2.2 north east) and (r3.2 south west) .. (r3){};
\draw (r2.center) .. controls (r2.2 north west) and (m2.2 south east) .. (m2){};
\draw (r2) .. controls (r2.4 north east) and (r3.4 south east) .. (r3.center){};
\draw (l3.center) .. controls (l3.2 north east) and (m3.2 south west) .. (m3){};
\draw (r3.center) .. controls (r3.2 north west) and (m3.2 south east) .. (m3.center){};
\draw (l3) .. controls (l3.4 north west) and (l4.4 south west) .. (l4.center){};
\draw (r3) .. controls (r3.4 north east) and (r4.4 south east) .. (r4.center){};
\draw (l4) .. controls (l4.2 north west) and (l5.2 south east) .. (l5.center){};
\draw (r4) .. controls (r4.2 north east) and (r5.2 south west) .. (r5.center){};
\draw (l4.center) .. controls (l4.2 north east) and (m4.2 south west) .. (m4.center){};
\draw (r4.center) .. controls (r4.2 north west) and (m4.2 south east) .. (m4){};
\draw (l4) .. controls (l4.2 south east) and (m3.2 north west) .. (m3.center){};
\draw (r4) .. controls (r4.2 south west) and (m3.2 north east) .. (m3){};
\draw (l5) .. controls (l5.2 north east) and (l6.2 south west) .. (l6.center){};
\draw (r5) .. controls (r5.2 north west) and (r6.2 south east) .. (r6.center){};
\draw (l5.center) .. controls (l5.4 north west) and (l6.4 north west) .. (l6){};
\draw (r5.center) .. controls (r5.4 north east) and (r6.4 north east) .. (r6){};
\draw (m4) .. controls (m4.2 north west) and (l6.2 south east) .. (l6){};
\draw (m4.center) .. controls (m4.2 north east) and (r6.2 south west) .. (r6){};
\draw (l6.center) .. controls (l6.4 north east) and (r6.4 north west) .. (r6.center){};
\draw [dashed, red] (0,-3.5) -- (0,6){};
\end{scope}
\begin{scope}[xshift=19cm, yshift=3.5cm]
\node (l1) at (-1,-3.2){};
\node (l2) at (-1,-1.7){};
\node (l3) at (-1,0){};
\node (l4) at (-1,1.5){};
\node (l5) at (-2,2){};
\node (l6) at (-1,3){};
\node (m1) at (0,-1){};
\node (m2) at (0,1){};
\node (r1) at (1,-3.2){};
\node (r2) at (1,-1.7){};
\node (r4) at (1,1.5){};
\node (r3) at (1,0){};
\node (r5) at (2,2){};
\node (r6) at (1,3){};
\draw (l1) .. controls (l1.4 south east) and (r1.4 south west) .. (r1){};
\draw (l1.center) .. controls (l1.2 north east) and (l2.2 south east) .. (l2){};
\draw (r1.center) .. controls (r1.2 north west) and (r2.2 south west) .. (r2){};
\draw (l1) .. controls (l1.2 north west) and (l2.2 south west) ..  (l2.center){};
\draw (r1) .. controls (r1.2 north east) and (r2.2 south east) ..(r2.center){};
\draw (l2.center) .. controls (l2.2 north east) and (m1.2 south west) .. (m1.center){};
\draw (r2.center) .. controls (r2.2 north west) and (m1.2 south east) .. (m1){};
\draw (l2) .. controls (l2.2 north west) and (l3.2 south west) ..  (l3.center){};
\draw (r2) .. controls (r2.2 north east) and (r3.2 south east) ..(r3.center){};
\draw (l3) .. controls (l3.2 south east) and (m1.2 north west) .. (m1){};
\draw (r3) .. controls (r3.2 south west) and (m1.2 north east) .. (m1.center){};
\draw (l3) .. controls (l3.2 north west) and (l4.2 south west) .. (l4.center){};
\draw (r3) .. controls (r3.2 north east) and (r4.2 south east) .. (r4.center){};
\draw (l3.center) .. controls (l3.2 north east) and (m2.2 south west) .. (m2){};
\draw (r3.center) .. controls (r3.2 north west) and (m2.2 south east) .. (m2.center){};
\draw (l1.center) .. controls (l1.8 south west) and (l5.8 south west) .. (l5){};
\draw (r1.center) .. controls (r1.8 south east) and (r5.8 south east) .. (r5){};
\draw (l5.center) .. controls (l5.2 south east) and (l4.2 west) .. (l4){};
\draw (l4) .. controls (l4.2 east) and (m2.2 north west) .. (m2.center){};
\draw (r5.center) .. controls (r5.2 south west) and (r4.2 east) .. (r4){};
\draw (r4) .. controls (r4.2 west) and (m2.2 north east) .. (m2){};
\draw (l5) .. controls (l5.2 north east) and (l6.2 south west) .. (l6.center){};
\draw (r5) .. controls (r5.2 north west) and (r6.2 south east) .. (r6.center){};
\draw (l5.center) .. controls (l5.4 north west) and (l6.4 north west) .. (l6){};
\draw (r5.center) .. controls (r5.4 north east) and (r6.4 north east) .. (r6){};
\draw (l4.center) .. controls (l4.2 north east) and (l6.2 south east) .. (l6){};
\draw (r4.center) .. controls (r4.2 north west) and (r6.2 south west) .. (r6){};
\draw (l6.center) .. controls (l6.4 north east) and (r6.4 north west) .. (r6.center){};
\draw [dashed, red] (0,-4.3) -- (0,4){};
\end{scope}
	\end{tikzpicture}
	\caption{Symmetric union diagrams of $8_9$ (left) and $10_{42}$ (center and right)}
	\label{f:89}
\end{figure}
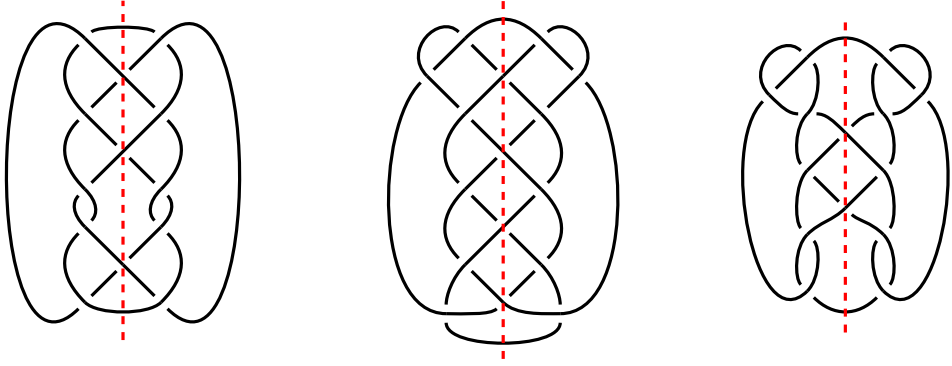
The two symmetric union diagrams of $10_{42}$ in Figure~\ref{f:89} are not 
strongly symmetrically equivalent because the associated symmetric singular links have 
different numbers of double points (four and two). Similarly, the  
symmetric union diagram of $8_9$ from Figure~\ref{f:89} is not strongly symmetrically equivalent
to the diagram of the same knot obtained by switching all the crossings on the axis, because 
the two diagrams have different numbers of \emph{signed} crossings on the axis. 
(Note that both diagrams represent $8_9$ because they clearly represent mirror 
equivalent knots, and $8_9$ is amphicheiral).

These examples show that the notion of strong symmetric equivalence is not a very subtle one, 
but Eisermann and Lamm consider two extra moves on symmetric diagrams, which they call   
$S1(\pm)$ and $S2(v)$. Some examples of the extra moves are illustrated in Figure~\ref{f:extrasRm}. 
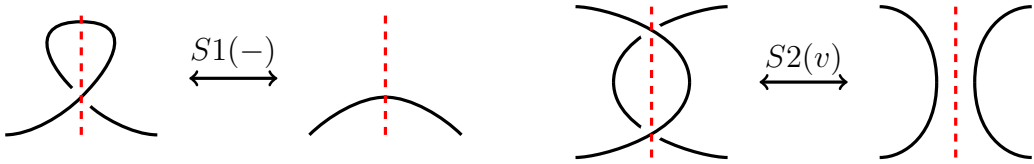
\begin{figure}[ht]
	\centering
	\pgfdeclarelayer{layer1}
\pgfdeclarelayer{layer2}
\pgfdeclarelayer{layer3}
\pgfsetlayers{layer1,layer2,layer3}
\begin{tikzpicture}[scale=0.5, every path/.style ={very thick}, 
every node/.style={knot crossing, inner sep = 3pt}] 
\begin{pgfonlayer}{layer1}
\begin{scope}[yshift=0.6cm]
\node (l1) at (-2,0) {};
\node (r1) at (2,0) {};
\node (c1) at (0,1) {};
\node (c2) at (0,3) {};
\draw (l1.center) .. controls (l1.2 east) and (c1.2 south west) .. (c1.center);
\draw (c1.center) .. controls (c1.4 north east) and (c2.4 east) .. (c2.center);
\draw (c2.center) .. controls (c2.4 west) and (c1.4 north west) .. (c1);
\draw (c1) .. controls (c1.2 south east) and (r1.2 west) .. (r1.center);
\draw[dashed, red] (0,0) -- (0,3.3); 
\end{scope}
\begin{scope}[xshift=4cm,yshift=0.6cm]
\node (l) at (-1.5,1.5) {};
\node (r) at (1.5,1.5) {};
\node at (0,2.2) {$S1(-)$};
\draw[<->] (l) -- (r);
\end{scope}
\begin{scope}[xshift=8cm,yshift=0.6cm]
\node (l1) at (-2,0) {};
\node (r1) at (2,0) {};
\node (c1) at (0,1) {};
\draw (l1.center) .. controls (l1.2 north east) and (c1.2 west) .. (c1.center);
\draw (c1.center) .. controls (c1.2 east) and (r1.2 north west) .. (r1.center);
\draw[dashed, red] (0,0) -- (0,3.3); 
\end{scope}
\begin{scope}[xshift=15cm]
\node (l1) at (-2,0) {};
\node (l2) at (-1,2) {};
\node (l3) at (-2,4) {};
\node (r1) at (2,0) {};
\node (r2) at (1,2) {};
\node (r3) at (2,4) {};
\begin{pgfonlayer}{layer2}
\node[circle,color=white,fill=white,radius=5pt] (c1) at (0,0.7) {};
\node[circle,color=white,fill=white,radius=5pt] (c2) at (0,3.3) {};
\end{pgfonlayer}
\begin{pgfonlayer}{layer3}
\draw (l1.center) .. controls (l1.2 east) and (r2.4 south) .. (r2.center);
\draw (r2.center) .. controls (r2.4 north) and (l3.2 east) .. (l3.center);
\draw[dashed, red] (0,0) -- (0,4); 
\end{pgfonlayer}
\draw (r1.center) .. controls (r1.2 west) and (l2.4 south) .. (l2.center);
\draw (l2.center) .. controls (l2.4 north) and (r3.2 west) .. (r3.center);
\end{scope}
\begin{scope}[xshift=19cm]
\node (l) at (-1.5,2) {};
\node (r) at (1.5,2) {};
\node at (0,2.6) {$S2(v)$};
\draw[<->] (l) -- (r);
\end{scope}
\begin{scope}[xshift=23cm]
\node (l1) at (-2,0) {};
\node (l2) at (-0.5,2) {};
\node (l3) at (-2,4) {};
\node (r1) at (2,0) {};
\node (r2) at (0.5,2) {};
\node (r3) at (2,4) {};
\draw (l1.center) .. controls (l1.2 east) and (l2.4 south) .. (l2.center);
\draw (l2.center) .. controls (l2.4 north) and (l3.2 east) .. (l3.center);
\draw (r1.center) .. controls (r1.2 west) and (r2.4 south) .. (r2.center);
\draw (r2.center) .. controls (r2.4 north) and (r3.2 west) .. (r3.center);
\draw[dashed, red] (0,0) -- (0,4); 
\end{scope}
\end{pgfonlayer}
\end{tikzpicture}
	\caption{Extra moves $S1(-)$ and $S2(v)$}
	\label{f:extrasRm}
\end{figure} 
\begin{defns}\label{d:SE}
Two oriented, symmetric diagrams which can be obtained from each other via a finite sequence 
of symmetric Reidemester (or $sR$) moves \emph{and $S1$ moves} will be 
called~\emph{symmetrically equivalent}. If they can be obtained from each other using $sR$ moves, 
$S1$ \emph{and $S2(v)$ moves}, we will say that the diagrams are~\emph{weakly symmetrically equivalent}. 
\end{defns}
The notions of symmetric equivalence introduced with Definitions~\ref{d:SE} are more subtle 
than strong symmetric equivalence: for instance, 
it is not obvious whether the two symmetric diagrams of $8_9$ and $10_{42}$ described above  
are symmetrically equivalent (weakly or not).

\subsection{Eisermann and Lamm's refined Jones polynomial and its applications}\label{ss:rJp}

To each oriented link diagram $D\subset\bR^2$ transverse 
to $B =\{0\}\x\bR$, Eisermann and Lamm associate an invariant of weak symmetric equivalence $W(D)$ taking values in the quotient field $\bZ(X_A,X_B)$ of the ring of Laurent polynomials in the variables $X_A$ and $X_B$ with integer coefficients. The invariant is defined by setting 
\[
W(D) = (-X_A^{-3})^{w_A(D)} (-X_B^{-3})^{w_B(D)} \langle D\rangle
\]
where $w_A(D)$ and $w_B(D)$ are, respectively, the sum of crossing signs off and on the axis, and 
$\langle D\rangle$ is a refined Kauffman bracket specified by the skein relation 
\[
\left<\leftcross{}\right> = X_A \left<\horres{} \right> + X_A^{-1} \left<\vertres\right>
\]
for crossings off the axis, the skein relations
\[
\left<\leftcrossaxis{}\right> = X_B \left<\horresaxis{} \right> + X_B^{-1} \left<\vertresaxis\right>
\quad\quad\quad
\left<\rightcrossaxis{}\right> = X_B^{-1} \left<\horresaxis{} \right> + X_B \left<\vertresaxis\right>
\]
for crossings on the axis, and taking the value 
\[
\left< C\right> = (-X_A^2-X_A^{-2}) ^{n-m} (-X_B^2-X_B^{-2})^{m-1}
\]
on a collection $C$ of $n$ circles intersecting the axis $B$ in $2m$ points. 

It turns out~\cite[Propostion~1.8]{Ei.La11} that when $D$ is a symmetric union knot diagram, the invariant $W(D)$ is an honest Laurent polynomial. Using the $W$-polynomial Eisermann and Lamm show in~\cite{Ei.La11} that the diagram for $8_9$ in Figure~\ref{f:89} is not weakly symmetrically equivalent to the one 
obtained by switching crossings on the axis, and they exhibit an infinite family of pairs of symmetric union $2$-bridge knot diagrams $(D_n, D'_n)$ such that $D_n$ and $D'_n$ are Reidemeister equivalent 
but not weakly symmetrically equivalent for $n = 3$ and $n\geq 5$. The diagrams $D_4$ and $D'_4$, representing the knot $10_{42}$ are those shown in Figure~\ref{f:89}. They have the same $W$-polynomial, so the question of their (weak) symmetric equivalence was left unanswered. 

\subsection{Results and contents of the paper}\label{ss:results}

Our main result is Theorem~\ref{t:refined-invariance}, stating that (i) every topological spin model~\cite{Jo89} gives rise to infinitely many invariants of symmetric equivalence and (ii) 
such invariants satisfying a certain extra condition are in fact invariants of weak symmetric equivalence. As we point out in Remark~\ref{r:refined-typeII-exist}, each topological spin model gives rise in this way to at least four (essentially equivalent) invariants of weak symmetric equivalence. 

We give the following three applications of Theorem~\ref{t:refined-invariance}. $(1)$ Let $D_{10_{42}}$ (respectively $D'_{10_{42}}$) be the central (respectively the right-most) symmetric union diagram of Figure~\ref{f:89}. We prove that $D_{10_{42}}$ and $D'_{10_{42}}$ are not symmetrically equivalent, providing a partial answer to a question left open by Eisermann and Lamm~\cite[\S 6.4]{Ei.La11}. 
$(2)$ Let $D_{8_9}$ be the left-most diagram of Figure~\ref{f:89}, and let $D'_{8_9}$ be the diagram obtained from $D_{8_9}$ by switching all the crossings on the axis. As we explained in the paragraph immediately following Theorem~\ref{t:sRm}, the two diagrams $D_{8_9}$ and $D'_{8_9}$ are Reidemeister equivalent. We use Theorem~2.4 to prove that $D_{8_9}$ and $D'_{8_9}$ 
are not weakly symmetrically equivalent. $(3)$ We apply a gluing formula in conjunction with Theorem~\ref{t:refined-invariance} to construct, 
for each $n\geq 1$, symmetrically non-equivalent symmetric union diagrams of the connected sum of $n$ copies of $10_{42}$, as well as weakly symmetrically non-equivalent symmetric union diagrams of the connected sum of $n$ copies of $8_9$.

Section~\ref{s:spin-models} contains the necessary background material and the statement of Theorem~\ref{t:refined-invariance}. Section~\ref{s:invariance} contains the proof of Theorem~\ref{t:refined-invariance}. Section~\ref{s:applications} contains three applications of Theorem~\ref{t:refined-invariance}, and Section~\ref{s:gluing} the proof of the gluing formula.

\subsection*{Acknowledgements} The first author was partially supported by an Indam grant, and hosted by the IMT in Toulouse, during the early stages of this paper. The first author wishes to thank Francesco Costantino and the IMT for the hospitality. The authors are grateful to the anonymous referee for helpful comments and suggestions.

\section{Spin models and their refinements}\label{s:spin-models}

\subsection{Spin models}\label{ss:spinmodels}
We recall the theory of topological spin models for links in $S^3$ as introduced in~\cite{Jo89}. Let $X = \{1,2,\ldots, n\}$, $n\geq 2$, denote by $\Mat$ the space of square $n\x n$ complex matrices whose rows and columns are indexed by elements of the set $X$, and let $d\in\{\pm\sqrt{n}\}$. Given a symmetric, 
complex matrix $W^+\in \Mat$ with nonzero entries, let $W^-\in \Mat$ be the matrix uniquely determined by 
the equation 
\begin{equation}\label{e:type-II}
W^+\circ W^- = J,
\end{equation}
where $\circ$ is the Hadamard, i.e.~entry-wise, product and $J$ is the all-$1$ matrix.
Define, for each matrix $A\in \Mat$ and $a,b\in X$, the vector $Y^A_{ab}\in\bC^X$ by setting 
\[
Y^A_{ab}(x) := \frac {A(x,a)}{A(x,b)}\in\bC,\quad x\in X.  
\]
Then, the pair $M=(W^+,d)$ is a {\em spin model} if the following equations hold:
\begin{equation}\label{e:type-III}
W^+ Y^{W^+}_{ab} = d W^-(a,b) Y^{W^+}_{ab}\quad\text{for every $a,b\in X$}.
\end{equation} 
Observe that, since $Y^{W^+}_{aa}$ is the all-$1$ vector for each $a\in X$, 
taking $b=a$ in Equation~\eqref{e:type-III} gives  
\begin{equation}\label{e:type-I}
\dfrac1d\sum_{x\in X} W^+(y,x) = W^-(a,a)\quad\text{for every $y,a\in X$.}
\end{equation}
In particular, $W^-(a,a)$ and therefore the {\em modulus} $\al_W = W^+(a,a)=1/W^-(a,a)\in\bC$ 
of the spin model, are independent of $a\in X$. 

\begin{exas}
$(1)$ Let $n\geq 2$ be an integer and $d\in\{\pm \sqrt{n}\}$. Let $\xi\in\bC\setminus\{0\}$ 
be one of the four complex numbers such that $d = -\xi^2-\xi^{-2}$. Then, setting  
\[
W^+_{\rm Potts} = (-\xi^{-3})I + \xi (J-I),
\]
the pair $(W^+_{\rm Potts},d)$ is the well--known {\em Potts model} introduced in~\cite{Jo89}.
	
\noindent
$(2)$ Let $d = \sqrt{5}$, $\om = e^{2\pi i/5}$ and  
\[
	W^+_{\rm pent} = \mat{
		1&\om &\om^{-1}&\om^{-1} & \om\\
		\om&1&\om&\om^{-1}&\om^{-1}\\
		\om^{-1}&\om&1&\om&\om^{-1}\\
		\om^{-1}&\om^{-1}&\om& 1&\om\\
		\om&\om^{-1}&\om^{-1}&\om&1}.
\]
Then, $(W^+_{\rm pent},d)$ is one of the spin models studied in~\cite{Go.Jo89}
and mentioned in~\cite{Jo89, Jo89a}. We shall call it the {\em pentagonal model}, 
like the `rescaled' version $(-iW^+_{\rm pent},-\sqrt{5})$ considered in~\cite{de94}.
\end{exas} 

A spin model $M=(W^+,d)$ defines a link invariant as follows. Let $D\subset\bR^2$ be a connected diagram of an oriented link. Let $\Ga_D$ be the planar, signed medial 
graph associated to the black regions of any checkerboard coloring of $\bR^2\setminus D$. 
Let $\Ga_D^0$, $\Ga_D^1$ be the sets of vertices, respectively edges of $\Ga_D$ and let $N=|\Ga_D^0|$. Given $e\in\Ga^1$, we denote by $v_e$ and $w_e$ (in any order) the vertices of $e$.  Define the {\em partition function} $Z_M(D)\in\bC$ by 
\[
Z_M(D) = d^{-N}\sum_{\si\co\Ga^0_D\to X} \prod_{e\in\Ga_D^1} W^{s(e)}(\si(v_e),\si(w_e)),
\]
where the sum is taken over the set of all maps $\si$ from $\Ga^0_D$ to $X$, and $s(e)\in\{+,-\}$ is the sign of the edge $e$. Let the \emph{normalized partition function} $I_M(D)$ be
\[
I_M(D) := \al_W^{-w(D)} Z_M(D), 
\]
where $w(D)$ is the writhe of $D$. When $D$ is not connected, we define both 
$Z_M(D)$ and $I_M(D)$ as the product of the values 
of $Z_M$ and, respectively, $I_M$ on its connected components. 
\begin{thm}[\cite{Jo89}]\label{t:invariance}
	Let $M=(W^+,d)$ be a spin model and $D\subset\bR^2$ a connected, oriented 
	link diagram. Then, (i) $I_M(D)$ is independent of the choice of coloring and 
	(ii) $I_M (D) = I_M (D')$ for every link diagram $D'$ Reidemeister equivalent to $D$.
	\qed\end{thm}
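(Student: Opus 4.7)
I would prove the two parts separately, using the three defining relations of the spin model, namely \eqref{e:type-II}, \eqref{e:type-III}, and the consequence \eqref{e:type-I}. The general strategy is standard: read off how a local change of the diagram affects the planar signed medial graph $\Ga_D$, compute the corresponding local change in the sum defining $Z_M(D)$, and appeal to the spin-model axioms to see that these local changes cancel.

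For part (i), I would observe that the two checkerboard colorings of $\bR^2\setminus D$ produce signed medial graphs that are flat planar duals of each other and in which every edge has the opposite sign. Independence of the coloring is thus equivalent to a statement of ``self-duality'' of the spin model: in a finite sum of products of $W^+$- and $W^-$-entries, the dual sum can be reorganized by a discrete Fourier-type argument that uses the fact that $W^-$ is the Hadamard inverse of $W^+$ together with the eigenvalue equation \eqref{e:type-III}. Concretely, one reduces to an inductive check on the complexity of $\Ga_D$ in which each step eliminates a vertex or an edge using one of the spin-model axioms.

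For part (ii), I would verify invariance separately under Reidemeister moves R2, R3, and R1. In the medial graph, an R2 move removes two parallel edges with opposite signs connecting the same pair of vertices, and invariance of $Z_M$ reduces to the relation $\sum_x W^+(a,x)W^-(b,x)=d\,\de_{a,b}$, which one extracts from \eqref{e:type-II} and \eqref{e:type-III} (it is the ``honest'' type II condition). An R3 move translates into a local deformation of $\Ga_D$ that swaps two triangles sharing a common vertex; the resulting identity is exactly the star-triangle equation encoded by \eqref{e:type-III}, namely $\sum_x W^+(y,x)W^+(x,a)W^-(x,b)=d\,W^-(a,b)W^+(y,a)W^-(y,b)$. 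An R1 move introduces a self-loop in $\Ga_D$ and multiplies $Z_M$ by $d\,\al_W^{\pm 1}$ by \eqref{e:type-I}; the factor $d$ is absorbed by the change $N\mapsto N+1$, and the factor $\al_W^{\pm 1}$ by the writhe correction $\al_W^{-w(D)}$, which is precisely why $I_M$, and not $Z_M$, is the invariant.

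\textbf{Main obstacle.} The conceptually delicate point is part (i), since the claim that ``changing the coloring gives the same partition function'' is not a local identity but a global feature of spin models; it is what forces all three axioms to be used simultaneously. Once coloring independence is established, the proof of (ii) is essentially bookkeeping: the real care is in matching conventions, so that each variant of an Rk-move (obtained by switching crossings or changing the coloring class of the adjacent region) is translated into the correct signed version of the relevant spin-model identity, and so that the powers of $d$ coming from $d^{-N}$ and from \eqref{e:type-I} line up with the writhe factor $\al_W^{-w(D)}$.
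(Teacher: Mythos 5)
The paper itself does not prove Theorem~\ref{t:invariance}: it is quoted from~\cite{Jo89} with a reference, so your sketch has to be measured against Jones' argument (which the present paper imitates in its refined setting in Section~\ref{s:invariance}). The genuine gap is in your part (i). The ``self-duality / discrete Fourier / induction on the complexity of $\Ga_D$'' plan is not an argument: the intermediate graphs in such an induction are no longer Tait graphs of diagrams, and the relations \eqref{e:type-II}--\eqref{e:type-I} do not let you eliminate an arbitrary vertex or edge of a signed planar graph. More tellingly, you never use the hypothesis that $D$ is connected, and statement (i) is false without it: for two disjoint round circles one checkerboard coloring gives $d^{-2}n^{2}=n$ while the other gives $d^{-1}n=d$, which is exactly why the paper defines $Z_M$ and $I_M$ of disconnected diagrams as products over components. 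The standard proof (Jones' Proposition~2.14, echoed here in Corollary~\ref{c:color-indep}) first establishes invariance under the local moves for a fixed coloring and then uses connectedness to slide one strand around the rest of the diagram, an operation that interchanges the two checkerboard colorings; so the logical order is the reverse of your plan, and without this idea (or some substitute that actually uses connectedness) part (i) remains unproved.

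For part (ii) your strategy is the standard one, but the move dictionary conflates the two incarnations that each Reidemeister move has, depending on the color of the distinguished region, and one identity you invoke is not available for free. An R2 move with white bigon removes two parallel edges of opposite sign and needs only the Hadamard relation \eqref{e:type-II}, i.e.\ $W^+(a,b)W^-(a,b)=1$, with no sum and no factor of $d$; an R2 move with black bigon removes a degree-two vertex in series and needs $\sum_x W^+(a,x)W^-(x,b)=n\,\delta_{a,b}$ (note $n=d^2$, not $d\,\delta_{a,b}$ as you wrote), the factor $n$ being absorbed by $N\mapsto N+2$. Similarly R1 is either a loop at a vertex (factor $\al_W^{\pm1}$, no new vertex, no $d$) or a pendant vertex (factor $d\,\al_W^{\mp1}$ via \eqref{e:type-I} together with $N\mapsto N+1$); your description mixes the two cases. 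More seriously, the series identity $W^+W^-=nI$ cannot be ``extracted from \eqref{e:type-II} and \eqref{e:type-III}'': the matrix $W^+=d^{-1/2}J$ is symmetric with nonzero entries and satisfies both displayed conditions, yet $W^+W^-=nJ\neq nI$ and its partition function is not invariant under the series R2 move. In \cite{Jo89} this is a separate axiom (the type~II condition), implicitly in force here as well since the Nomura-algebra facts quoted from~\cite{Ja.Ma.No98} concern type~II matrices; if you insist on the minimal definition you must prove it, e.g.\ by the orthogonality of $Y^{W^+}_{ab}$ and the all-one vector as eigenvectors of the complex symmetric matrix $W^+$ when $W^-(a,b)\neq W^-(a,a)$, plus a separate treatment of the degenerate case. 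Your star-triangle identity for R3 is correct.
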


\subsection{Refined spin models}\label{ss:refinedspinmodels}

Our idea is to refine the definition of a topological spin model by taking into account the presence 
of the axis, in the spirit of the refined Jones polynomial of Subsection~\ref{ss:rJp}.
Let $D\subset\bR^2$ be an oriented link diagram transverse to the axis 
$B = \{0\}\x\bR$. Since $B$ goes through some of the crossings of $D$, for any choice 
of a checkerboard coloring of $\bR^2\setminus D$, the corresponding medial 
graph $\Ga_D$ acquires some distinguished edges. We are going to assign suitably chosen 
weights to such distinguished edges. 

Let $X=\{1,\ldots, n\}$ with $n\geq 2$, and let $(W^+,d)$ be a spin model with $W^+\in \Mat$. 
Recall from Subsection~\ref{ss:spinmodels} that the matrix $W^+$ determines the vectors 
$Y^W_{ab}\in\bC^X$, $a,b\in X$. Nomura~\cite{No97} showed that the set $N_W\subset \Mat$ 
of matrices which have the vectors $Y^W_{ab}$ as eigenvectors is a commutative algebra with 
respect to both the ordinary matrix product and the Hadamard product.  
$N_W$ is sometimes called the~\emph{Nomura algebra}. Clearly, Equations~\eqref{e:type-III} 
imply $W^+\in N_W$. Let $\psi\co N_W\to \Mat$ be the map defined by requiring that, for each $A\in N_W$, 
the matrix $\psi(A)$ satisfies 
\[
A Y^{W^+}_{ab} = \psi(A)(a,b) Y^{W^+}_{ab}\quad\text{for every $a,b\in X$}.
\]
We are going to use the following facts: (i) $N_W$ is closed under transposition and (ii) $N_W$ is \emph{self-dual}, which 
means that $\psi$ induces a linear isomorphism $\psi\co N_W\to N_W$ and $\psi^2 = n\tau$, 
where $\tau\co N_W\to N_W$ is the transposition map. For these facts, as well as for more information about the 
Nomura algebra, we refer the reader to~\cite{Ja.Ma.No98}.
Observe that Equation~\eqref{e:type-III} 
is equivalent to the equality $W^- = \psi(W^+)/d$, hence $W^-\in N_W$. 
More generally, given any matrix $A^+\in N_W$ we can define $A^- := \psi(A^+)/d\in N_W$. 
Then, it follows from $\psi^2 = n\tau$ and $d^2=n$ that $A^+ = \psi(A^-)/d$.
In the same way as Equation~\eqref{e:type-I} we deduce 
\begin{equation}\label{e:R1}
\dfrac1d\sum_{x\in X} A^+(y,x) = A^-(a,a)\quad\text{and}\quad
\dfrac1d\sum_{x\in X} A^-(y,x) = A^+(a,a)\quad\text{for every $y,a\in X$}.
\end{equation}
In particular, the complex numbers $\al_{A^+} := A^+(a,a)$ and $\al_{A^-} := A^-(a,a)$ are independent of $a\in X$.

\begin{defns}\label{d:refined-spin-model}
	A {\em refined spin model} is a triple $(W^+,V^+,d)$ such that: 
	\begin{itemize} 
		\item
		$(W^+,d)$ is a spin model; 
		\item
		$V^+$ is a symmetric matrix belonging to the Nomura algebra $N_W$; 
		\item 
		$\al_{V^+}\cdot \al_{V^-}\neq 0$. 
	\end{itemize}
A {\em refined spin model of type II} is a refined spin model $(W^+,V^+,d)$ such that 
$V^+$ is a {\em type II} matrix, i.e.~such that $V^+\circ V^- = J$.
\end{defns} 

\begin{rem}\label{r:refined-typeII-exist} 
Every spin model $(W^+,d)$ admits a refinement $(W^+,V^+,d)$ of type II. Indeed, by definition $I\in N_W$, therefore $J=\psi(I)\in N_W$. Thus, if $\xi\in\bC\setminus\{0\}$ is one of the four complex numbers such that $d=-\xi^2-\xi^{-2}$, the symmetric, type II matrix $(-\xi^{-3}) I + \xi (J-I)\in N_W$ can be chosen as $V^+$. In other words, each spin model $(W^+,d)$ admits four type II refinements of the form $(W^+, V^+_{\rm Potts}, d)$, where $V^+_{\rm Potts} = (-\xi^{-3}) I + \xi (J-I)$. Note that $(V^+_{\rm Potts}, d)$ is a Potts model. Refined spin models of the form $(W^+, V^+_{\rm Potts}, d)$ will be referred to as {\em Potts-refined spin models}. 
\end{rem}

Let $\widehat M=(W^+,V^+,d)$ be a a refined spin model, $D$ an oriented, symmetric link diagram, 
and $c$ a checkerboard coloring of $\bR^2\setminus D$. 
Let $\Ga_D$ be the planar, signed medial graph associated to the black regions of $c$. 
The set $\Ga_D^1$ of the edges of $\Ga_D$ contains the set $\Ga_B^1$ of edges corresponding to crossings on the axis.  
We define the {\em partition function} $Z_{\widehat M}(D,c)$ by the formula
\[
Z_{\widehat M}(D,c) := d^{-N} \sum_{\si\co\Ga^0_D\to X} 
\prod_{e\in\Ga_B^1} V^{s(e)}(\si(v_e),\si(w_e))\prod_{e\in\Ga_D^1\setminus \Ga_B^1} W^{s(e)}(\si(v_e),\si(w_e)), 
\]
where $s(e)\in\{+,-\}$ is the sign of the edge $e$, and the 
{\em normalized partition function} $I_{\widehat M}(D,c)$ by 
\[
I_{\widehat M}(D,c) :=  \al_{V^+}^{-p_B(D)}\al_{V^-}^{-n_B(D)} Z_{\widehat M}(D,c),
\]
where $p_B(D)$ and $n_B(D)$ denote, respectively, the numbers of positive and negative 
crossings on the axis. 
As  in the case of the ordinary spin models, when $D$ is not connected we define both 
$Z_{\widehat M}(D,c)$ and $I_{\widehat M}(D,c)$ as the product of the values of $Z_{\widehat M}$ 
and, respectively, $I_{\widehat M}$ on its connected components with the induced colorings. 

We are ready to state our main result. Its proof will be given in the next section. 

\begin{thm}\label{t:refined-invariance}
Let $\widehat M$ be a refined spin model and $D_i\subset\bR^2$, $i=1,2$ two oriented, 
symmetrically equivalent symmetric (with respect to the axis $B$) union diagrams. Then, for any choice of checkerboard colorings 
$c_i$ of $\bR^2\setminus D_i$, we have 
\begin{equation}\label{e:se-invariance}
I_{\widehat M}(D_1,c_1) = I_{\widehat M}(D_2,c_2).
\end{equation}
Moreover, if $\widehat M$ is of type II then~\eqref{e:se-invariance} 
holds if $D_1$ and $D_2$ are weakly symmetrically equivalent. 
\end{thm}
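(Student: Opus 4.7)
The plan is to reduce, via the Symmetric Reidemeister Theorem (Theorem~\ref{t:sRm}) and Definition~\ref{d:SE}, the claim to the verification of invariance of $I_{\widehat M}(D,c)$ under each generating move. Concretely, I must check that $I_{\widehat M}(D,c)$ is unchanged under: (i) change of the checkerboard coloring $c$; (ii) symmetric Reidemeister moves off the axis; (iii) the on-axis sR moves $S2(h)$, $S2(\pm)$, $S3(o\pm)$, $S3(u\pm)$ and $S4(\pm\pm)$; and (iv) $S1(\pm)$ moves. The type II hypothesis will enter only when verifying the $S2(v)$ move for weak symmetric equivalence.

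For steps (i) and (ii), I import Jones's classical arguments. An off-axis symmetric Reidemeister move is a pair of ordinary Reidemeister moves in mirror-symmetric locations, so only edges weighted by $W^\pm$ are affected, and invariance follows from Theorem~\ref{t:invariance} applied to each copy. In the paired $R1$ case, the two mirror kinks have opposite crossing signs (because $\rho$ reverses the sign of an off-axis crossing), so their local contributions $\al_W^{+1}\al_W^{-1}=1$ cancel, which is why no off-axis writhe correction appears in the normalization $\al_{V^+}^{-p_B(D)}\al_{V^-}^{-n_B(D)}$. For step (i), changing the checkerboard swaps the signs of all medial-graph edges; invariance of $Z_{\widehat M}$ follows from the symmetry of $W^+$ and $V^+$, together with the Nomura-duality relations $W^-=\psi(W^+)/d$, $V^-=\psi(V^+)/d$, $\psi(I)=J$ and $\psi(J)=nI$.

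Step (iv) is a bookkeeping argument: an $S1(\pm)$ move introduces one $\pm$-signed on-axis crossing, which in the medial graph appears as a loop or as a pendant edge to a new vertex, depending on the local checkerboard. Summing over the new vertex via Equation~\eqref{e:R1} contributes a factor $\al_{V^\pm}$ that is exactly absorbed by the change in $\al_{V^+}^{-p_B(D)}\al_{V^-}^{-n_B(D)}$. Step (iii) is the technical heart of the argument. For each sR move on the axis, invariance reduces to a local matrix identity. The crucial observation is that, since $V^+\in N_W$ is symmetric and $V^-=\psi(V^+)/d$, the pair $(V^+,V^-)$ satisfies the exact analogue of the type~III equation~\eqref{e:type-III}, namely $V^+Y^{W^+}_{ab}=dV^-(a,b)Y^{W^+}_{ab}$, as well as the analogue of the type~I relation~\eqref{e:type-I}, namely~\eqref{e:R1}. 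Consequently the matrix identities underlying Jones's invariance proof for the on-axis $R2$-like and $R3$-like content of $S2(\pm)$, $S2(h)$, $S3(o\pm)$ and $S3(u\pm)$ carry over to the refined setting, producing at most constant factors absorbed by the normalization. For the mixed $S4(\pm\pm)$ moves one further uses a mixed identity between $W^\pm$ and $V^\pm$ obtained by simultaneously applying them to the eigenvectors $Y^{W^+}_{ab}$, which again is a direct consequence of $V^+\in N_W$ combined with the spin model axioms for $(W^+,d)$.

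For the last assertion it remains to check invariance under $S2(v)$. A local inspection shows that this move exchanges a $V$-weighted edge and a $W$-weighted edge in a specific configuration, and the equality of the two sides' contributions amounts to $V^+(a,b)V^-(a,b)=1$ for all $a,b$, i.e.\ the type II condition $V^+\circ V^-=J$; this explains the extra hypothesis in the second part of the theorem. The main obstacle I expect is step (iii), especially the $S4(\pm\pm)$ moves, where one must carefully track both kinds of edges through the local combinatorial change and match the resulting algebraic identity with the appropriate combination of the spin model axioms for $(W^+,d)$ and the Nomura-duality properties of $\psi$.
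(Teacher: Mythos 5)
Your overall strategy coincides with the paper's: reduce to a move-by-move check, use the identities $V^+Y^{W^+}_{ab}=dV^-(a,b)Y^{W^+}_{ab}$ and \eqref{e:R1} coming from $V^+\in N_W$ for the on-axis moves, absorb $S1$ via the normalization, and invoke $V^+\circ V^-=J$ only for $S2(v)$. However, there is a genuine gap at your step (i), independence of the checkerboard coloring. Changing the coloring does \emph{not} keep the same medial graph with all edge signs swapped: it replaces the signed Tait graph by its \emph{planar dual} (with signs reversed), so the vertex set and even the number of vertices change. Consequently the equality $Z_{\widehat M}(D,c)=Z_{\widehat M}(D,c')$ is not a formal consequence of the symmetry of $W^+,V^+$ and the relations $\psi(I)=J$, $\psi(J)=nI$; it is a nontrivial duality statement, the analogue of \cite[Proposition~2.14]{Jo89}. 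The paper proves it (Corollary~\ref{c:color-indep}) only for symmetric union diagrams, by a geometric argument: one uses a strand meeting the axis in a non-crossing point, slides it around the diagram by off-axis moves, $S2(h)$ and $S3$ moves --- whose invariance must therefore be established first, for \emph{arbitrary} colorings, via the star-triangle identities of Figure~\ref{f:ST2} --- so that the medial graph for one coloring is realized as the medial graph of a move-equivalent diagram with the other coloring. Since the theorem itself asserts equality for any $c_1,c_2$, and since your treatment of $S1$, $S2(\pm)$, $S4$ and $S2(v)$ implicitly chooses convenient colorings, this missing step propagates through the rest of your argument; your proof needs either the paper's strand-sliding argument or a genuine duality proof in the refined setting.

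Two smaller points. For $S2(v)$ the local effect is misdescribed: the move creates or removes two crossings \emph{on the axis} with opposite signs, i.e.\ two parallel $V$-weighted edges between the same pair of vertices, not an exchange of a $V$-edge with a $W$-edge; your identification of $V^+\circ V^-=J$ as the needed identity is nevertheless correct, but you should also note that under the type~II hypothesis $\al_{V^+}\al_{V^-}=1$, so the change $p_B\mapsto p_B+1$, $n_B\mapsto n_B+1$ does not affect $I_{\widehat M}$. Finally, your treatment of $S4(\pm\pm)$ is only a declaration that a ``mixed identity'' exists; in the paper this step requires a specific chain of four star-triangle identities followed by two cancellations via $W^+\circ W^-=J$ (Figure~\ref{f:S4-proof}), so as written this part is a sketch rather than a proof, though the ingredients you name are the right ones.
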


\section{Proof of Theorem~\ref{t:refined-invariance}}\label{s:invariance}

Throughout the section we denote by $\widehat{M}$  
a fixed refined spin model $(W^+,V^+,d)$ and by $M$ its underlying spin model $(W^+,d)$.

\subsection{Invariance under the $sR$ and the $S2(h)$ moves}\label{ss:sRmS2hinv}

\begin{prop}\label{p:R1S2(h)inv}
	Let $\widehat{M}$ be a refined spin model and $(D,c)$ and $(D',c')$ two colored and 
	oriented symmetric link diagrams. If $(D',c')$ is obtained from $(D,c)$ by applying 
	either an $S2(h)$ move or a symmetric Reidemeister move off the axis, then
	\[
	I_{\widehat{M}}(D,c) = I_{\widehat{M}}(D',c').
	\]
\end{prop}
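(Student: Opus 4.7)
I would outline the proof as follows. The crucial observation is that both kinds of moves only modify crossings that lie \emph{off} the axis, so the medial-graph edges affected all carry $W$-weights, not $V$-weights. Moreover, the numbers $p_B(D)$ and $n_B(D)$ of signed axis crossings are unchanged, so the normalization factor $\al_{V^+}^{-p_B(D)}\al_{V^-}^{-n_B(D)}$ is the same for $(D,c)$ and $(D',c')$. Hence it suffices to prove $Z_{\widehat M}(D,c) = Z_{\widehat M}(D',c')$, and for this one only needs the type I, type II, and type III identities for $(W^+,d)$, which are built into the definition of a spin model and underlie Theorem~\ref{t:invariance}.

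For symmetric R2 and R3 moves off the axis, the move takes place in a small disk $U$ disjoint from $B$, together with its mirror in $\rho(U)$. The two disks are disjoint and each involves only $W$-weighted edges, so I would sum over the states of the internal regions of $U$ and $\rho(U)$ independently, applying the type II equation (for R2) or the type III equation (for R3) for $W$ in each disk, while the rest of the diagram is left untouched. For $S2(h)$, the two crossings created or destroyed by the move sit at symmetric off-axis positions -- the bubble straddles $B$, but the crossings themselves do not -- so from the partition-function point of view this is an ordinary R2 move, and the same type II identity for $W$ yields invariance of $Z_{\widehat M}$.

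The more delicate case is the symmetric R1 off the axis. Because the resulting diagram must again satisfy $\rho(D') = \overline{D'}$ and because reflection across $B$ reverses the sign of each off-axis crossing, the R1 in $U$ and its mirror in $\rho(U)$ are forced to introduce kinks of \emph{opposite} signs. By the type I identity for $W$, each of the two R1's then contributes a factor of $\al_W^{\pm 1}$ to $Z_{\widehat M}$, with sign matching that of the kink; the two factors $\al_W^{\ep}$ and $\al_W^{-\ep}$ cancel, so $Z_{\widehat M}$ is invariant without any writhe correction -- which is exactly why $I_{\widehat M}$ can be defined without an $\al_W^{-w(D)}$ factor.

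The main obstacle I anticipate is the careful bookkeeping of sign conventions, most notably verifying that the mirror-paired kinks in the R1 case really do have opposite signs rather than matching signs, and confirming that the local type II and III identities for $W$ transfer cleanly to the refined partition function without interference from the $V$-weighted edges elsewhere in the diagram.
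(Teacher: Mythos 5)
Your proposal is correct and follows essentially the same route as the paper, which simply observes that these moves leave the axis edges of the medial graph (and hence $p_B$, $n_B$) untouched and then invokes the classical spin-model invariance identities for $W^{\pm}$ as in Jones's argument. Your explicit check that the mirror-paired $R1$ kinks carry opposite signs, so their $\al_W^{\pm 1}$ contributions cancel and no writhe normalization is needed, is a point the paper's citation-style proof leaves implicit, and you have it right.
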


\begin{proof}
	An $S2(h)$-move does not change the edges of the medial graph $\Ga_D$ corresponding 
	to crossings on the axis, therefore the equality $I_{\widehat{M}}(D',c') = I_{\widehat{M}}(D,c)$ 
	holds for the same reason as the equality $I_M(D) = I_M(D')$ (cf.~\cite{Jo89, de94}). 
	A similar argument applies for a symmetric Reidemeister move off the axis. 
\end{proof}

\subsection{Invariance under the $S3$ and the $S2(\pm)$ moves}\label{ss:S3-S2inv}

Suppose that the medial graphs $\Ga_D$ and $\Ga_{D'}$ of two colored, oriented, 
symmetric link diagrams $(D,c)$ and $(D',c')$ appear locally as in 
Figure~\ref{f:ST1} and coincide elsewhere. The dashed arrows represent edges corresponding 
to crossings on the axis -- let us ignore the vertex labels for the moment.  
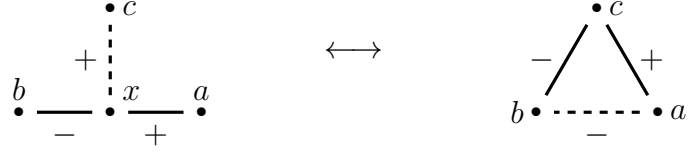
\begin{figure}[ht]
	\centering
	\begin{tikzpicture}[scale=0.8, every path/.style ={very thick}] 
\node (a) at (1.5,0) {\tiny$\bullet$};
\node (b) at (-1.5,0) {\tiny$\bullet$};
\node (c) at (0,1.732) {\tiny$\bullet$};
\node (x) at (0,0) {\tiny$\bullet$};

\path[dashed] (x) edge node[left] {$+$} (c);
\path (x) edge node[below] {$+$}(a);
\path (b) edge node[below] {$-$} (x);

\node[above right] at (x) {$x$};
\node[above] at (a) {$a$};
\node[right] at (c) {$c$};
\node[above] at (b) {$b$};

\begin{scope}[xshift = 4cm]
\node (arrow) at (0,1) {$\longleftrightarrow$};
\end{scope}

\begin{scope}[xshift = 8cm]
\node (a) at (1,0) {\tiny$\bullet$};
\node (b) at (-1,0) {\tiny$\bullet$};
\node (c) at (0,1.732) {\tiny$\bullet$};

\path[dashed] (a) edge node[below] {$-$} (b);
\path (c) edge node[right] {$+$} (a);
\path (b) edge node[left] {$-$} (c);

\node[right] at (a) {$a$};
\node[right] at (c) {$c$};
\node[left] at (b) {$b$};

\end{scope}
\end{tikzpicture}
	\caption{The directed medial graphs $\Ga_D$ (left) and $\Ga_{D'}$ (right)}
	\label{f:ST1}
\end{figure}
Then, we claim that the equality $I_{\widehat{M}}(D,c) = I_{\widehat{M}}(D',c')$ holds 
for each refined spin model $\widehat{M}$. As explained in Subsection~\ref{ss:spinmodels}, 
we have $\psi(V^+) = d V^-$ if and only if $V^+ Y^{W^+}_{ab} = d V^-(a,b) Y^{W^+}_{ab}$
for every $a,b\in X$. More explicitly,  
\begin{equation}\label{e:Star-triangle-1}
\sum_{x\in X} V^+(x,c)W^{+}(x,a)W^{-}(b,x) = d V^{-}(a,b)W^{+}(c,a)W^-(b,c),
\quad\text{for each}\quad a,b,c\in X.
\end{equation}
As the labels in Figure~\ref{f:ST1} show, Equations~\eqref{e:Star-triangle-1} 
guarantee that the different local contributions to the 
normalized partition functions for $D$ and $D'$ coincide. 
Note that, although the three vertices labeled $a,b$ and $c$ are drawn as if they were distinct, 
the equality $I_{\widehat{M}}(D,c) = I_{\widehat{M}}(D',c')$ still holds if two 
of them coincide. 

All possible instances of locally different medial graphs with the same normalized partition functions are displayed in Figure~\ref{f:ST2}, where $\ep_1,\ep_2\in\{\pm\}$. 
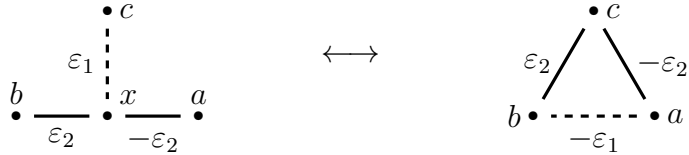
\begin{figure}[ht]
	\centering 
	\begin{tikzpicture}[scale=0.8, every path/.style ={very thick}] 
\node (a) at (1.5,0) {\tiny$\bullet$};
\node (b) at (-1.5,0) {\tiny$\bullet$};
\node (c) at (0,1.732) {\tiny$\bullet$};
\node (x) at (0,0) {\tiny$\bullet$};

\path[dashed] (c) edge node[left] {$\ep_1$} (x);
\path (x) edge node[below] {$-\ep_2$}(a);
\path (b) edge node[below] {$\ep_2$} (x);

\node[above right] at (x) {$x$};
\node[above] at (a) {$a$};
\node[right] at (c) {$c$};
\node[above] at (b) {$b$};

\begin{scope}[xshift = 4cm]
\node (arrow) at (0,1) {$\longleftrightarrow$};
\end{scope}

\begin{scope}[xshift = 8cm]
\node (a) at (1,0) {\tiny$\bullet$};
\node (b) at (-1,0) {\tiny$\bullet$};
\node (c) at (0,1.732) {\tiny$\bullet$};

\path[dashed] (a) edge node[below] {$-\ep_1$} (b);
\path (c) edge node[right] {$-\ep_2$} (a);
\path (b) edge node[left] {$\ep_2$} (c);

\node[right] at (a) {$a$};
\node[right] at (c) {$c$};
\node[left] at (b) {$b$};
\end{scope}

\end{tikzpicture}
	\caption{All the star-triangle identities in graphical form.}
	\label{f:ST2}
\end{figure}
We will make use of them in Subsections~\ref{ss:S3-S2inv} and~\ref{ss:S4inv}. 
The previous remark about the vertices labeled $a,b$ and $c$ applies. 
Following standard terminology, we shall call {\em star-triangle identities} the identities in Figure~\ref{f:ST2}. The reason why such identities hold is the following. As explained in Subsection~\ref{ss:refinedspinmodels}, the equality $\psi(V^+)=dV^-$ implies that 
$\psi(V^-) = dV^+$, which is equivalent to saying that $V^- Y^{W^+}_{ab} = d V^+(a,b) Y^{W^+}_{ab}$
for every $a,b\in X$. Moreover, since $Y^{W^+}_{ab} = Y^{W^-}_{ba}$ for every $a,b\in X$, 
we also have $V^+ Y^{W^-}_{ab} = d V^-(a,b) Y^{W^-}_{ab}$ and 
$V^- Y^{W^-}_{ab} = d V^+(a,b) Y^{W^-}_{ab}$ for every $a,b\in X$. One can now easily 
check that these equations imply the identities of Figure~\ref{f:ST2}. 

The following remark will be used in Subsection~\ref{ss:S4inv}. 
\begin{rmk}\label{r:star-triangle-graph}
	The algebraic identities represented by the graphs of Figure~\ref{f:ST2} hold for 
	the normalized partition function (defined in the obvious way) of any signed 
	graph $\Ga$ with some distinguished edges. In particular, $\Ga$ does not 
	need to be the medial graph of a diagram transverse to the axis.  
\end{rmk}

\begin{prop}\label{p:S3inv}
	Let $\widehat{M}$ be a refined spin model and $(D,c)$, $(D',c')$ two colored, 
	oriented symmetric union link diagrams. If $(D',c')$ 
	is obtained from $(D,c)$ by applying a symmetric Reidemeister 
	move of type $S3(o\pm)$ or $S3(u\pm)$, then
	\[
	I_{\widehat{M}}(D,c) = I_{\widehat{M}}(D',c').
	\]
\end{prop}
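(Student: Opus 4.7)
The plan is to show that each $S3$ move induces, on the signed medial graphs, a local change which is precisely one of the star-triangle identities of Figure~\ref{f:ST2}. Since the rest of the graph is unchanged and the normalization factors are preserved, this will give the desired equality.

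First I would fix a checkerboard coloring $c$ of $\bR^2\setminus D$. Because the $S3(o\pm)$ or $S3(u\pm)$ move is supported in a small disc $\De$ meeting the axis, $c$ restricts to a coloring on the complement of $\De$ which extends uniquely to a coloring $c'$ of $\bR^2\setminus D'$. I would verify that $I_{\widehat{M}}(D,c) = I_{\widehat{M}}(D',c')$ for this induced pair; the statement for arbitrary pairs of colorings then reduces to the coloring-independence of $I_{\widehat{M}}$, which can be checked by an analogous local argument.

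The core of the proof is the inspection of the local medial graphs. Every $S3$ move involves exactly three crossings, precisely one of which lies on the axis both before and after the move; the corresponding edge of $\Ga_D$ is therefore ``dashed'' (weighted by $V^{\pm}$) while the other two edges are ``solid'' (weighted by $W^{\pm}$). Depending on the checkerboard coloring inside $\De$, the three edges form a \emph{star} configuration meeting at a central vertex $x$ in one of $\Ga_D,\Ga_{D'}$ and a \emph{triangle} in the other. A direct inspection of the four $S3$ variants (distinguishing ``$o$'' from ``$u$'' and the two sign conventions), together with the two possible local colorings, shows that the resulting pattern of edge signs falls under one of the cases $(\ep_1,\ep_2)\in\{\pm\}^2$ of Figure~\ref{f:ST2}. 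The star-triangle identity~\eqref{e:Star-triangle-1} (and its variants obtained by flipping signs, which follow from $\psi(V^\pm)=dV^\mp$ together with $Y^{W^+}_{ab}=Y^{W^-}_{ba}$) then says that summing over the value of $\si(x)$ in the star configuration produces exactly the triangle contribution; the factor $d$ on the right-hand side of~\eqref{e:Star-triangle-1} absorbs the change in the prefactor $d^{-N}$ caused by losing or gaining the vertex $x$. Finally, since an $S3$ move does not alter the number of positive nor negative crossings on the axis, the factors $\al_{V^+}^{-p_B}\al_{V^-}^{-n_B}$ are unchanged.

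The main obstacle is the finite but slightly tedious bookkeeping of signs: for each of the moves $S3(o\pm)$ and $S3(u\pm)$, one must check that, for either choice of coloring inside $\De$, the signs on the three edges of the medial graph realize one of the labeled configurations in Figure~\ref{f:ST2}. This parallels exactly the sign verification at the heart of the classical invariance proof for ordinary spin models (cf.~\cite{Jo89, de94}), the only new feature being that one edge of the triangle carries a $V^{\pm}$-weight rather than a $W^{\pm}$-weight. The fact that the star-triangle identities continue to hold in this refined setting is precisely the Nomura-algebra input: $V^+\in N_W$ guarantees that the matrices $V^{\pm}$ act on the eigenvectors $Y^{W^{\pm}}_{ab}$ by the scalars $dV^{\mp}(a,b)$, which is the algebraic engine behind Equation~\eqref{e:Star-triangle-1}.
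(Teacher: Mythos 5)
Your argument is correct and is essentially the paper's own proof: one checks that the local change of the (colored) medial graph under an $S3(o\pm)$ or $S3(u\pm)$ move is exactly one of the star-triangle identities of Figure~\ref{f:ST2} (one dashed, $V^{\pm}$-weighted edge and two $W^{\pm}$-weighted edges), with the factor $d$ in~\eqref{e:Star-triangle-1} absorbing the change in the prefactor $d^{-N}$ and with $p_B$, $n_B$ unchanged. The only superfluous aside is your remark that coloring independence follows from ``an analogous local argument'': in the paper this is not local but is proved afterwards (Corollary~\ref{c:color-indep}) using this very proposition, while the proposition itself only requires the coloring of $D'$ induced by the move.
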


\begin{proof}
	The possible local changes of a colored symmetric union diagram are obtained from the one 
	shown in Figure~\ref{f:S3} by mirroring the picture or rotating it by $180^0$ around the 
	$x$, $y$ or $z$ axes. 
	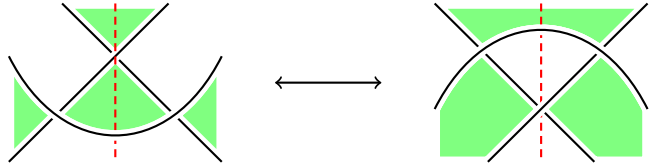
\begin{figure}[ht]
		\begin{tikzpicture}[scale=1.4] 

\draw[fill, green, opacity =.5] (-1,-.5) --  (-1,.5) .. controls +(.5,-1) and +(-.5,-1) .. (1,.5) -- (1,-.5) --  (-.5,1) --  (.5,1) -- (-1,-.5);

\pgfsetlinewidth{10*\pgflinewidth}
\draw[white] (-1,-.5) --  (-1,.5) .. controls +(.5,-1) and +(-.5,-1) .. (1,.5) -- (1,-.5) --  (-.5,1) --  (.5,1) -- (-1,-.5);
\pgfsetlinewidth{.2*\pgflinewidth}

\draw  (1,-.5) --  (-.5,1);

\pgfsetlinewidth{5*\pgflinewidth}
\draw[white](-1,-.5) --  (.5,1);
\pgfsetlinewidth{.2*\pgflinewidth}
\draw  (-1,-.5) --  (.5,1);

\draw[densely dashed, red] (0,1) -- (0,-0.5);

\pgfsetlinewidth{5*\pgflinewidth}
\draw[white](-1,.5) .. controls +(.5,-1) and +(-.5,-1) .. (1,.5);
\pgfsetlinewidth{.2*\pgflinewidth}
\draw (-1,.5) .. controls +(.5,-1) and +(-.5,-1) .. (1,.5);

\draw[<->] (1.5,0.25) -- (2.5,0.25);

\draw[fill, green, opacity =.5] (3,1) --  (5,1) -- (3.5,-.5) -- (3,-.5) -- (3,0) .. controls +(.5,1) and +(-.5,1) .. (5,0) --  (5,-.5) -- (4.5,-.5) -- (3.5,.5) -- cycle;

\draw[fill, white] (4,0) -- (4.56,.56) .. controls +(-.25,.25) and +(.25,.25) .. (3.44,.56) -- cycle;

\pgfsetlinewidth{5*\pgflinewidth}
\draw[white] (3,1) --  (5,1) -- (3.5,-.5) -- (3,-.5) -- (3,0) .. controls +(.5,1) and +(-.5,1) .. (5,0) --  (5,-.5) -- (4.5,-.5) -- (3.5,.5) -- cycle;
\draw[white]  (3,0) .. controls +(.5,1) and +(-.5,1) .. (5,0);
\pgfsetlinewidth{.2*\pgflinewidth}

\draw  (4.5,-.5) -- (3,1);

\draw[densely dashed, red] (4,1) -- (4,-0.5);

\pgfsetlinewidth{5*\pgflinewidth}
\draw[white](5,1) -- (3.5,-.5);
\pgfsetlinewidth{.2*\pgflinewidth}
\draw  (5,1) -- (3.5,-.5);
\pgfsetlinewidth{5*\pgflinewidth}
\draw[white] (3,0) .. controls +(.5,1) and +(-.5,1) .. (5,0);
\pgfsetlinewidth{.2*\pgflinewidth}
\draw (3,0) .. controls +(.5,1) and +(-.5,1) .. (5,0);

\end{tikzpicture}
		\caption{Local changes induced by $S3$ moves}
		\label{f:S3}
	\end{figure}
	It is a straightforward exercise to check that all the changes of the corresponding medial graphs
	are included among the ones described by Figure~\ref{f:ST2}. This immediately implies the statement. 
\end{proof} 

\begin{cor}\label{c:color-indep}
	Let $\widehat{M}$ be a refined spin model and $D$ an oriented, symmetric union link diagram. 
	Given distinct colorings $c$ and $c'$, we have 
	\[
	I_{\widehat{M}}(D,c) = I_{\widehat{M}}(D,c').
	\]
\end{cor}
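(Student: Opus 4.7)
The approach is to extend the classical ``Kramers--Wannier'' duality argument for ordinary spin models~\cite{Jo89} to the refined setting. First, since $Z_{\widehat M}$ and $I_{\widehat M}$ are multiplicative over connected components, I reduce to the case where $D$ is connected, so that $\bR^2\setminus D$ admits exactly two checkerboard colorings $c, c'$ related by interchanging black and white. Under this swap the medial graph $\Ga_{D,c}$ is replaced by its planar dual $\Ga_{D,c'}$, and every edge sign is reversed, so that off-axis weights $W^{s(e)}$ become $W^{-s(e)}$ and on-axis weights $V^{s(e)}$ become $V^{-s(e)}$. Since the normalization factors $\al_W^{-w(D)}$, $\al_{V^+}^{-p_B(D)}$ and $\al_{V^-}^{-n_B(D)}$ depend only on the oriented diagram $D$, the task reduces to proving
\[
d^{-N}\sum_{\si\co\Ga^0_{D,c}\to X} \prod_e (\cdots) \;=\; d^{-N'}\sum_{\si'\co\Ga^0_{D,c'}\to X} \prod_e (\cdots),
\]
where $N$ and $N'$ are the sizes of the two vertex sets and the products run over the edges with the appropriate weights as above.

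The key step is to carry out this duality identity by a sequence of local moves. For an ordinary spin model this is done in~\cite{Jo89} using only the type-I identity~\eqref{e:type-I} and the type-III identity~\eqref{e:type-III} for $W^\pm$. In our refined setting the analogues of these identities for on-axis weights are already available: the type-I equation~\eqref{e:R1} and the star-triangle identities of Figure~\ref{f:ST2} involving one $V$-edge and two $W$-edges. Both hold precisely because $V^+\in N_W$ forces $V^-=\psi(V^+)/d\in N_W$, as observed in Subsection~\ref{ss:refinedspinmodels}. Using Remark~\ref{r:star-triangle-graph} to interpret these identities as purely graph-theoretic, one can then run Jones's local manipulations crossing by crossing to transform one side of the proposed equation into the other.

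The main obstacle is the bookkeeping required to handle the interplay between on-axis and off-axis edges, especially at vertices where both types meet. The full library of star-triangle identities in Figure~\ref{f:ST2}, for all sign combinations $\ep_1, \ep_2\in\{\pm\}$, already covers these mixed configurations, so the argument should go through cleanly. Notably, the type-II identity $V^+\circ V^-=J$ is \emph{not} needed at any stage, which is consistent with the corollary applying to all refined spin models, not just those of type II; this is in contrast with the subsequent invariance under the $S2(v)$ move required for weak symmetric equivalence in Theorem~\ref{t:refined-invariance}, where the type-II condition will be essential.
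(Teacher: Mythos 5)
Your reduction is fine as far as it goes: for a connected diagram the two colorings exchange the medial graph with its planar dual with all signs reversed, the normalization factors are unchanged, so everything hinges on the duality identity for the partition functions. The genuine gap is that you never prove that identity. You assert that it can be ``carried out by a sequence of local moves'' and that ``one can run Jones's local manipulations crossing by crossing,'' but the identities you have at your disposal --- the star--triangle relations of Figure~\ref{f:ST2} and the type-I relations~\eqref{e:R1} --- are local exchanges that add or remove a single vertex in very specific three-edge configurations; they do not, by themselves, transform a plane graph into its planar dual, and you give no construction of such a sequence (already for a triangle of $W$-edges versus its dual pair of vertices joined by three parallel edges it is not clear how your moves would do this without some global rearrangement). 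Moreover, the attribution to~\cite{Jo89} is off: the coloring-independence there is not a crossing-by-crossing duality computation but a global trick, namely sweeping a strand of the diagram around the rest of it using Reidemeister invariance, so that the roles of the two colorings are exchanged. So the heart of the argument is missing, not merely left to bookkeeping.

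This is also why your proposal makes no use of the hypothesis that $D$ is a symmetric \emph{union}, which should be a warning sign. The paper's proof needs exactly that hypothesis: since some strand of $D$ meets the axis transversally away from all crossings, one can symmetrically sweep that strand ``around'' the diagram using only symmetric Reidemeister moves off the axis, $S2(h)$ and $S3$ moves --- precisely the moves whose invariance (Propositions~\ref{p:R1S2(h)inv} and~\ref{p:S3inv}) has been established \emph{without} appealing to color independence --- and the resulting diagram with coloring $c$ has the same medial graph as $(D,c')$ (Figure~\ref{f:duality}). That is the refined analogue of Jones's sweep, adapted so that all moves respect the axis; it also confirms your (correct) observation that the type-II condition on $V^+$ is never needed here. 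To repair your argument you would either have to reproduce this sweep, or give an honest algebraic proof of the graph/dual-graph identity for refined partition functions, which is a substantially harder global statement than the local identities you invoke.
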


\begin{proof}
	The proof we give is similar in spirit to the proof of~\cite[Proposition~2.14]{Jo89}.  
	Since $D$ is a symmetric union diagram, at least one strand of $D$ intersects the axis away from 
	the crossings. Applying a sequence of symmetric Reidemeister moves, $S2(h)$ and $S3$ moves 
	we can shift that strand ``downwards'' without changing $I_{\widehat{M}}$. Hence, 
	we assume without loss of generality that $(D,c)$ looks like the colored diagram shown 
	in the second picture from the left in Figure~\ref{f:duality}. Note that the medial graph 
	of $(D,c)$ coincides with the medial graph of the leftmost diagram in Figure~\ref{f:duality}. 
	Another sequence of symmetric Reidemeister moves, $S2(h)$ and $S3$ moves as suggested in the 
	remaining pictures of Figure~\ref{f:duality} turns $(D,c)$, without altering $I_{\widehat{M}}$, 
	into the right-most diagram of Figure~\ref{f:duality}, which has the same medial graph 
	as $(D,c')$.  
\end{proof}

\begin{figure}[ht]
	\begin{tikzpicture}[scale = 0.75, every path/.style ={very thick}]
\draw (-1,-1) rectangle (1,1);
\draw[dashed, red] (0,-2) -- (0,1.5);
\node at (0,0) {$D$};
\draw[pattern color = red, pattern = north east lines] (-.75,-1) .. controls +(0,-.5) and +(0,-.5)  .. (.75,-1) -- cycle;
\draw (-.75,-1) .. controls +(0,-.5) and +(0,-.5) .. (.75,-1) -- cycle;


\begin{scope}[xshift=0.5cm]
\node at (1.8,0){$\longleftrightarrow$};
\draw[pattern color = red, pattern = north east lines] (3.25,-1) -- (3.25,-1.5) .. controls +(-1.5,0) and +(0,-1.5) .. (3.25,-1.5) -- (4.75,-1.5) .. controls +(1.5,0) and +(0,-1.5) .. (4.75,-1.5)  -- (4.75,-1)-- cycle;

\draw[white, very thick] (3.25,-1) -- (3.25,-1.5) .. controls +(-1.5,0) and +(0,-1.5) .. (3.25,-1.5) -- (4.75,-1.5) .. controls +(1.5,0) and +(0,-1.5) .. (4.75,-1.5)  -- (4.75,-1)-- cycle;

\draw (3.25,-1) -- (3.25,-1.5) .. controls +(-1.5,0) and +(0,-1.5) .. (3.25,-1.5) -- (4.75,-1.5) .. controls +(1.5,0) and +(0,-1.5) .. (4.75,-1.5)  -- (4.75,-1)-- cycle;
\draw (3,-1) rectangle (5,1);
\draw[dashed, red] (4,-2) -- (4,1.5);
\node at (4,0) {$D$};
\end{scope}


\node at (6.5,0){$\longleftrightarrow$};
\draw[pattern = north east lines,pattern color = red ] (8.25,-1) -- (8.25,-1.5) .. controls +(0,-1.5)  and +(-1.5,0).. (8.25,1.25) -- (9.75,1.25) .. controls +(1.5,0) and +(0,-1.5) .. (9.75,-1.5)  -- (9.75,-1)-- cycle;

\draw[white, very thick] (8.25,-1) -- (8.25,-1.5) .. controls +(0,-1.5)  and +(-1.5,0).. (8.25,1.25) -- (9.75,1.25) .. controls +(1.5,0) and +(0,-1.5) .. (9.75,-1.5)  -- (9.75,-1)-- cycle;

\draw (8.25,-1) -- (8.25,-1.5) .. controls +(0,-1.5)  and +(-1.5,0).. (8.25,1.25) -- (9.75,1.25) .. controls +(1.5,0) and +(0,-1.5) .. (9.75,-1.5)  -- (9.75,-1)-- cycle;

\draw[fill, white] (8,-1) rectangle (10,1);
\draw (8,-1) rectangle (10,1);
\draw[dashed, red] (9,-2) -- (9,1.5);
\node at (9,0) {$D$};


\begin{scope}[shift = {+(-15,0)}]
\draw[pattern color = red, pattern = north east lines] (7,-2) rectangle (11,1.5);
\draw[white] (7,-2) rectangle (11,1.5);
\draw[fill, white] (8.25,-1) -- (8.25,-1.5) .. controls +(0,-1.5)  and +(-1.5,0).. (8.25,1.25) -- (9.75,1.25) .. controls +(1.5,0) and +(0,-1.5) .. (9.75,-1.5)  -- (9.75,-1)-- cycle;

\draw[white, very thick] (8.25,-1) -- (8.25,-1.5) .. controls +(0,-1.5)  and +(-1.5,0).. (8.25,1.25) -- (9.75,1.25) .. controls +(1.5,0) and +(0,-1.5) .. (9.75,-1.5)  -- (9.75,-1)-- cycle;

\draw (8.25,-1) -- (8.25,-1.5) .. controls +(0,-1.5)  and +(-1.5,0).. (8.25,1.25) -- (9.75,1.25) .. controls +(1.5,0) and +(0,-1.5) .. (9.75,-1.5)  -- (9.75,-1)-- cycle;

\draw[fill, white] (8,-1) rectangle (10,1);
\draw (8,-1) rectangle (10,1);
\draw[dashed, red] (9,-2) -- (9,1.5);
\node at (9,0) {$D$};
\end{scope}
\end{tikzpicture} 
	\caption{Independence of $I_{\widehat{M}}$ from the choice of coloring}
	\label{f:duality}
\end{figure}
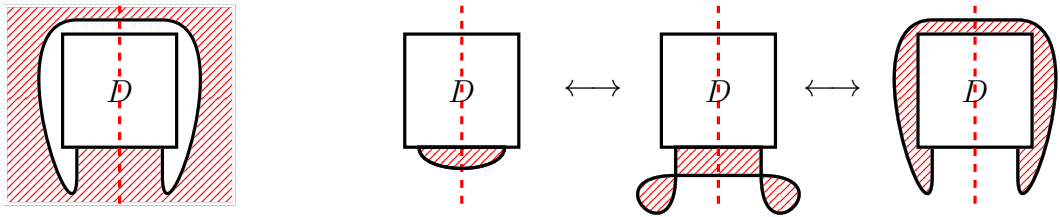

In view of Corollary~\ref{c:color-indep}, from now on we shall omit the coloring from the 
notation for the normalized partition function of symmetric union diagrams.  

\begin{prop}\label{p:S2(pm)}
	Let $D$ and $D'$ be two oriented, symmetric union link diagrams. If $D'$ is obtained 
	from $D$ by applying a symmetric Reidemeister move of type $S2(+)$ or $S2(-)$, then 
	\[
	I_{\widehat{M}}(D') = I_{\widehat{M}}(D).
	\]
\end{prop}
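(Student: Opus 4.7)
The plan is to prove invariance under the $S2(\pm)$ moves along the same lines as Proposition~\ref{p:S3inv}: inspect the change induced on the local medial graph of a colored symmetric union diagram by an $S2(+)$ move (and similarly $S2(-)$), and express this change as a composition of star-triangle identities, as summarized in Figure~\ref{f:ST2}. By Remark~\ref{r:star-triangle-graph}, the star-triangle identities hold for the partition function of any signed graph with distinguished edges, so each such identity preserves the normalized partition function, and so does any composition of them.

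More concretely, the $S2(+)$ move affects a small region of the diagram containing one crossing on the axis (contributing a distinguished $V$-edge in the medial graph) and two off-axis crossings (contributing two $W$-edges); the $S2(-)$ move is analogous. I would first apply a star-triangle identity mixing the $V$-edge with one of the adjacent $W$-edges to transform the configuration into an intermediate medial graph, and then apply a second star-triangle identity to reduce that intermediate graph to the medial graph of the post-move diagram. Since the star-triangle identities do not require a type II assumption on $V^+$, the resulting invariance applies to all refined spin models, which is what is needed for the first part of Theorem~\ref{t:refined-invariance}. The prefactors $\al_{V^+}^{-p_B(D)}\al_{V^-}^{-n_B(D)}\al_W^{-w(D)}$ remain unchanged, because the move preserves the numbers $p_B(D)$ and $n_B(D)$ of signed axis crossings as well as the writhe (the two off-axis crossings being mirror images of each other across the axis, hence of opposite signs).

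The main obstacle will be the case analysis. Both $S2(+)$ and $S2(-)$ admit several variants obtained by mirroring or rotating the local picture, and the associated diagrams can carry various orientations. By Corollary~\ref{c:color-indep} one may fix the checkerboard coloring, but even so one must verify case-by-case that the local medial graph modification can be realized as a sequence of star-triangle identities of the form displayed in Figure~\ref{f:ST2}; the combinatorics of labeling the edges with the correct signs and $V^{\pm}$ versus $W^{\pm}$ tags is the most delicate part of the bookkeeping.
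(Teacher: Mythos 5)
Your overall framing---fix a convenient checkerboard coloring via Corollary~\ref{c:color-indep} and compare the local medial graphs before and after the move---is the right starting point, but the mechanism you propose cannot work. With the coloring used in Figure~\ref{f:S2(-)}, the $S2(\pm)$ move does not change the vertex set of the medial graph at all: it merely adds (or removes) two parallel \emph{off-axis} edges, carrying opposite signs, joining the same two vertices $a,b$ that are already joined by the dashed $V$-edge coming from the axis crossing (whose sign, and hence $p_B$ and $n_B$, are unchanged). A star-triangle identity from Figure~\ref{f:ST2} always involves a trivalent internal vertex that gets summed over, and each such identity has exactly one dashed and two solid edges on both of its sides; hence any composition of star-triangle identities preserves the number of $V$-edges and the number of $W$-edges in the local configuration and changes the number of vertices by one at each step. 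Since the $S2(\pm)$ move changes the number of $W$-edges by two (with either choice of coloring), no sequence of star-triangle identities can realize it, and in the relevant local picture there is in fact no trivalent vertex to which the first identity you describe could even be applied.

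The missing ingredient is Equation~\eqref{e:type-II}: since $(W^+,d)$ is a spin model, $W^+(x,y)\,W^-(x,y)=1$ for all $x,y$, so in every term of the partition function the contributions of the two new parallel edges (of opposite signs, being mirror crossings) cancel, giving $Z_{\widehat M}(D')=Z_{\widehat M}(D)$ and therefore $I_{\widehat M}(D')=I_{\widehat M}(D)$; this edge cancellation is exactly the step also used in the $S4$ argument, and it is the entire proof here---no star-triangle identities are needed. Two further small corrections: the refined normalization is $\al_{V^+}^{-p_B(D)}\al_{V^-}^{-n_B(D)}Z_{\widehat M}(D)$, with no factor $\al_W^{-w(D)}$, so the writhe bookkeeping you invoke is not part of the definition; and you are right that no type~II hypothesis on $V^+$ is needed, but only because the cancellation uses the defining relation \eqref{e:type-II} for $W^\pm$, not because the star-triangle identities avoid it.
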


\begin{proof}
	By Corollary~\ref{c:color-indep}, it suffices to prove the statement for any choice of coloring. 
	The case of an $S2(-)$ move is illustrated in Figure~\ref{f:S2(-)}. 
	\begin{figure}[ht]
		\begin{tikzpicture}[scale=0.8, every path/.style ={very thick}, inner sep=2pt]

\draw[pattern = north east lines] (-1.5,-.5) .. controls +(1,0) and +(-1,0) .. (1.5,.5) -- (1.5,1.5) -- (-1.5,1.5) -- (-1.5,.5) .. controls +(1,0) and +(-1,0) .. (1.5,-.5) -- (1.5,-1.5) -- (-1.5,-1.5) -- cycle ;
\pgfsetlinewidth{10*\pgflinewidth}
\draw[white] (-1.5,-.5) .. controls +(1,0) and +(-1,0) .. (1.5,.5) -- (1.5,1.5) -- (-1.5,1.5) -- (-1.5,.5) .. controls +(1,0) and +(-1,0) .. (1.5,-.5) -- (1.5,-1.5) -- (-1.5,-1.5) -- cycle ;
\pgfsetlinewidth{.1*\pgflinewidth}

\draw (-1.5,-.5) .. controls +(1,0) and +(-1,0) .. (1.5,.5);
\pgfsetlinewidth{10*\pgflinewidth}
\draw[white] (-1.5,.5) .. controls +(1,0) and +(-1,0) .. (1.5,-.5);
\pgfsetlinewidth{.1*\pgflinewidth}
\draw (-1.5,.5) .. controls +(1,0) and +(-1,0) .. (1.5,-.5);

\draw[white, fill, opacity= .5] (-2,-2) rectangle (2,2);

\node (a) at (0,1) {\tiny$\bullet$};
\node (b) at (0,-1) {\tiny$\bullet$};
\draw[densely dashed, very thick] (a) -- (b);
\node at (0.4,0) {$-$};

\draw[<->] (3,0) -- (4,0);

\begin{scope}[shift = {+(7,0)}]
\draw[pattern = north east lines] (-1.5,-.5)  .. controls +(1,0) and +(-1,1) .. (0,0)  .. controls +(1,-1)  and +(-1,0) .. (1.5,.5) -- (1.5,1.5) -- (-1.5,1.5) -- (-1.5,.5).. controls +(1,0) and +(-1,-1) .. (0,0)  .. controls +(1,1)  and +(-1,0) .. (1.5,-.5) -- (1.5,-1.5) -- (-1.5,-1.5) -- cycle ;
\pgfsetlinewidth{10*\pgflinewidth}
\draw[white] (-1.5,-.5)  .. controls +(1,0) and +(-1,1) .. (0,0)  .. controls +(1,-1)  and +(-1,0) .. (1.5,.5) -- (1.5,1.5) -- (-1.5,1.5) -- (-1.5,.5).. controls +(1,0) and +(-1,-1) .. (0,0)  .. controls +(1,1)  and +(-1,0) .. (1.5,-.5) -- (1.5,-1.5) -- (-1.5,-1.5) -- cycle ;
\pgfsetlinewidth{.1*\pgflinewidth}

\draw(-1.5,-.5) .. controls +(1,0) and +(-1,1) .. (0,0);
\pgfsetlinewidth{10*\pgflinewidth}
\draw[white] (-1.5,.5) .. controls +(1,0) and +(-1,-1) .. (0,0);

\pgfsetlinewidth{.1*\pgflinewidth}
\draw (-1.5,.5) .. controls +(1,0) and +(-1,-1) .. (0,0)  .. controls +(1,1)  and +(-1,0) .. (1.5,-.5);
\pgfsetlinewidth{10*\pgflinewidth}
\draw[white] (0,0)  .. controls +(1,-1)  and +(-1,0) ..(1.5,.5);
\pgfsetlinewidth{.1*\pgflinewidth}
\draw  (0,0)  .. controls +(1,-1)  and +(-1,0) ..(1.5,.5);
\draw[white, fill] (0,0) circle (.1);

\draw (.0866,-.0866) -- (-.0866,.0866);

\draw[white, fill, opacity= .5] (-2,-2) rectangle (2,2);

\node (a) at (0,1) {\tiny$\bullet$};
\node (b) at (0,-1) {\tiny$\bullet$};
\draw[dashed, very thick] (a) -- (b);
\node at (0.3,0) {$-$};

\draw (a) .. controls +(-1,-1) and +(-1,1) .. (b);
\draw (a) .. controls +(1,-1) and +(1,1) .. (b);
\node[] at (1.1,0) {$+$};
\node[] at (-1.1,0) {$-$};
\end{scope}
\end{tikzpicture} 
		\caption{Colored diagrams and medial graphs differing by an $S2(-)$ move.}
		\label{f:S2(-)}
	\end{figure}
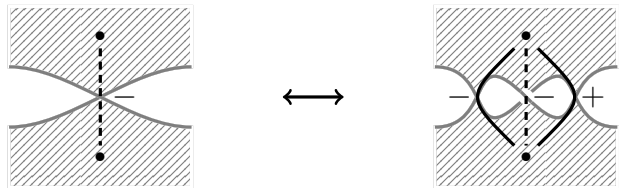
	The statement follows immediately from Equation~\eqref{e:type-II}. 
	The case of an $S2(+)$ is similar and left to the reader. 
\end{proof}

\subsection{Invariance under the $S1$ moves}\label{ss:S1inv}

\begin{prop}\label{p:S1inv}
	Let $\widehat{M}$ be  a refined spin model, and let $D$, $D'$ be two oriented, symmetric union link diagrams. If $D'$ is obtained from $D$ by applying an $S1$ move, then  
	\[
	I_{\widehat{M}}(D') = I_{\widehat{M}}(D).
	\]
\end{prop}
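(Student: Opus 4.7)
The plan is to prove invariance under $S1$ by the same local computation on the medial graph that establishes ordinary Reidemeister-I invariance for spin models (cf.~Theorem~\ref{t:invariance}), with the sum identity~\eqref{e:type-I} for $(W^+,W^-)$ replaced by the analogous identity~\eqref{e:R1} for $(V^+,V^-)$. By Corollary~\ref{c:color-indep} I am free to choose the checkerboard coloring, so I would pick the one in which the small region bounded by the kink introduced or removed by the $S1(\epsilon)$ move is black. Outside a small disk containing the kink, $D$ and $D'$ coincide and so do the chosen colorings; inside the disk, the only crossing of the diagram lies on the axis, so $\Ga_D$ differs from $\Ga_{D'}$ by exactly one extra vertex $v_b$ (the bubble) joined to one existing vertex by a single distinguished edge $e_b$, whose sign $\sigma\in\{+,-\}$ is determined by the over/under data of the crossing and by the coloring.

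I would then isolate the contribution of $v_b$ in $Z_{\widehat M}(D)$: letting $a\in X$ denote the color assigned to the neighbor of $v_b$, summation over $\si(v_b)=x$ produces
\[
\sum_{x\in X} V^{\sigma}(x,a)=d\,\al_{V^{-\sigma}}
\]
by~\eqref{e:R1}. Combining this with the factor $d^{-1}$ gained because $N$ increases by one in the prefactor $d^{-N}$, I obtain
\[
Z_{\widehat M}(D)=\al_{V^{-\sigma}}\,Z_{\widehat M}(D').
\]

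Next I would match this against the change in the normalization $\al_{V^+}^{-p_B(D)}\al_{V^-}^{-n_B(D)}$. The $S1(\epsilon)$ move increases exactly one of $p_B,n_B$ by one (according to $\epsilon$), so this normalizing factor changes by $\al_{V^\epsilon}^{-1}$ in passing from $D'$ to $D$. Checking the sign convention used to label medial-graph edges shows that $\sigma=-\epsilon$, in which case $\al_{V^{-\sigma}}\,\al_{V^\epsilon}^{-1}=1$ and the two changes cancel, yielding $I_{\widehat M}(D)=I_{\widehat M}(D')$.

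The main obstacle, in my view, is the sign bookkeeping in the last step: one must verify for each variant of $S1(\pm)$ (together with its rotations and mirror images) that the sign $\sigma$ of the new distinguished edge $e_b$ is exactly the opposite of the sign of the move. Once this is settled by inspection of the local picture, the whole invariance statement follows from a single application of~\eqref{e:R1}, so no deeper algebraic obstacle is present; in particular, unlike the $S2(v)$ move treated later, no type II hypothesis on $V^+$ is needed here.
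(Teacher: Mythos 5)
Your proposal is correct and follows essentially the same route as the paper: invoke Corollary~\ref{c:color-indep} to pick the coloring in which the kink region is black, so the medial graph changes by a single pendant vertex with one distinguished edge, sum over its color using~\eqref{e:R1} to pull out a factor $\al_{V^{\mp}}$, and cancel it against the change in the normalization $\al_{V^+}^{-p_B}\al_{V^-}^{-n_B}$. The sign bookkeeping you flag ($\sigma=-\epsilon$) is exactly what the paper records in Figure~\ref{f:S1inv} for the $S1(-)$ case (edge sign $+$, $n_B$ increasing by one), with the $S1(+)$ case left to the reader, and you correctly observe that no type~II hypothesis is needed here.
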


\begin{proof} 
	As in the proof of Proposition~\ref{p:S3inv}, the local change of a symmetric union 
	diagram due to a move of type $S1(-)$ is given, up to symmetries, by the left-hand portion of Figure~\ref{f:extrasRm}. In view of Corollary~\ref{c:color-indep}, the choice of coloring is irrelevant, so we make that choice so that the corresponding local change 
	of medial graphs is the one given by Figure~\ref{f:S1inv}. 
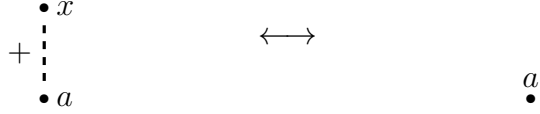
\begin{figure}[ht]
	\begin{tikzpicture}[scale=0.8, every path/.style ={very thick}] 
\node (x) at (0,1.5) {\tiny$\bullet$};
\node (a) at (0,0) {\tiny$\bullet$};

\path[dashed] (x) edge node[left] {$+$} (a);

\node[right] at (a) {$a$};
\node[right] at (x) {$x$};

\begin{scope}[xshift = 4cm]
\node (arrow) at (0,1) {$\longleftrightarrow$};
\end{scope}

\begin{scope}[xshift = 8cm]
\node (a) at (0,0) {\tiny$\bullet$};


\node[above] at (a) {$a$};

\end{scope}
\end{tikzpicture}
	\caption{Local change due to an $S1$ move.}
	\label{f:S1inv}
\end{figure}
Suppose that $\Ga_{D'}$ is locally given by the left-hand side of Figure~\ref{f:S1inv},  
denote by $v_0$ the vertex labelled $a$ and by $e_0$ the dashed edge connecting $v_0$ to the vertex labeled $x$. Let $N = |\Ga^0_D|$ be the number of vertices of $\Ga_D$, so that $|\Ga^0_{D'}|=N+1$. By the definition of the partition function we have 
\begin{align*}
& Z_{\widehat{M}}(D')  = d^{-N-1} \sum_{\si\co\Ga^0_{D'}\to X} 
\prod_{e\in\Ga^1_B} V^{s(e)}(\si(v_e),\si(w_e))\prod_{e\in\Ga^1_{D'}\setminus \Ga^1_B} W^{s(e)}(\si(v_e),\si(w_e)) \\ 
& = d^{-N} \sum_{\si\co\Ga^0_{D}\to X} [\dfrac1d \sum_{x\in X} V^+(\si(v_0), x)] \prod_{e\in\Ga^1_B\setminus\{e_0\}} V^{s(e)}(\si(v_e),\si(w_e))\prod_{e\in\Ga^1_D\setminus\Ga^1_B} W^{s(e)}(\si(v_e),\si(w_e)) \\
& = \al_{V^-} Z_{\widehat{M}}(D),
\end{align*}
where the last equality is due to the fact that $\dfrac1d\sum_{x\in X} V^+(a,x) = \al_{V^-}$
for each $a\in X$, which follows from~\eqref{e:R1}. The equality 
$I_{\widehat{M}}(D') = I_{\widehat{M}}(D)$ now follows immediately from $p_B(D')=p_B(D)$ 
and $n_B(D')=n_B(D)+1$. The argument for an $S1(+)$ move is similar and left to the reader. 
\end{proof}

\subsection{Invariance under the $S4$ moves}\label{ss:S4inv}

In view of the results of Subsections~\ref{ss:sRmS2hinv}, \ref{ss:S3-S2inv} and~\ref{ss:S1inv}, the following concludes the proof of the first part of Theorem~\ref{t:refined-invariance}. 

\begin{prop}\label{p:S4inv}
Let $\widehat{M}$ be  a refined spin model, and let $D$, $D'$ be two oriented, symmetric union link diagrams. If $D'$ is obtained from $D$ by applying an $S4$ move, then  
	\[
	I_{\widehat{M}}(D') = I_{\widehat{M}}(D).
	\]
\end{prop}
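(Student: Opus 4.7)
The plan is to reduce the invariance under $S4$ moves to a finite sequence of star-triangle identities, in the same spirit as the proof of Proposition~\ref{p:S3inv} but applied iteratively. The crucial enabling fact is Remark~\ref{r:star-triangle-graph}: the star-triangle identities hold on any signed graph with distinguished edges, not only on medial graphs of symmetric diagrams. This freedom is needed because the intermediate graphs that appear when collapsing the "before" picture of $S4$ into the "after" picture will generally not themselves be medial graphs of any symmetric union diagram.

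By Corollary~\ref{c:color-indep} I may fix a convenient checkerboard coloring on both sides of the move. For each variant $S4(\ep_1\ep_2)$ with $\ep_i\in\{\pm\}$ (together with its mirror and upside-down variants), I would first draw the portion of the medial graph $\Ga_D$ corresponding to the left-hand side of the move, and the analogous portion of $\Ga_{D'}$ for the right-hand side. The "larger" side contains the two axis vertices associated with the on-axis crossings, together with several additional vertices coming from the four off-axis crossings arranged mirror-symmetrically with respect to the axis; these vertices form two triangular subgraphs, one attached to each axis vertex. The "simpler" side contains only the two axis edges, joined by some off-axis edges.

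Next, I would apply the star-triangle identities of Figure~\ref{f:ST2} to collapse each triangular subgraph in turn. Because of Remark~\ref{r:star-triangle-graph}, this can be carried out even though after the first application the intermediate graph is no longer the medial graph of a symmetric diagram. Each step preserves the local contribution to the partition function. After the appropriate number of applications, the left-hand medial graph is transformed into the right-hand one, possibly up to a residual bigon of two edges of opposite sign in the off-axis region, which contributes a factor equal to $1$ by the Hadamard identity~\eqref{e:type-II} exactly as in the proof of Proposition~\ref{p:S2(pm)}.

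To complete the argument, I would verify that $p_B(D)=p_B(D')$ and $n_B(D)=n_B(D')$ (the $S4$ move neither creates nor destroys on-axis crossings, only rearranges the off-axis strands crossing each other near the axis), and that the off-axis crossings removed on the "larger" side contribute zero total writhe because of the mirror symmetry and the specific sign conventions of $S4$. Consequently the normalization factors $\al_{V^+}^{-p_B}\al_{V^-}^{-n_B}$ match, giving $I_{\widehat M}(D)=I_{\widehat M}(D')$. The main obstacle I anticipate is the combinatorial case analysis: each of the variants $S4(\pm\pm)$ (and its mirror/rotational counterparts) requires its own sequence of star-triangle applications, and one must check that in every case the residual graph is exactly the medial graph of $D'$ with matching signs on the surviving axis edges.
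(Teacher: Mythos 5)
Your proposal follows essentially the same route as the paper's proof: fix a convenient coloring via Corollary~\ref{c:color-indep}, pass to the medial graphs, and transform the larger graph into the smaller one by iterated star-triangle identities (applied to non-medial intermediate graphs, as licensed by Remark~\ref{r:star-triangle-graph}) together with cancellation of opposite-sign parallel edges via Equation~\eqref{e:type-II}, observing that the on-axis crossings and their signs, hence the normalization factors, are unchanged, and running a case analysis over the variants of the move. The only slips are descriptive and harmless: crossings correspond to edges, not vertices, of the medial graph, the larger side of $S4$ has eight off-axis crossings (four on each side of the axis) contributing four extra vertices arranged in a ladder rather than two triangles, and the actual reduction uses four star-triangle applications plus three bigon cancellations rather than collapsing two triangles.
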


\begin{proof}
	By Corollary~\ref{c:color-indep} we can choose an arbitrary coloring. We choose the 
	configuration of Figure~\ref{f:S4}. 
	\begin{figure}[ht]
		\begin{tikzpicture}[scale=0.25]

\draw[pattern = north east lines, pattern color  = red] (-10,5) -- (-2,5) .. controls +(1,0) and +(-1,0) .. (2,3) -- (10,3) -- (10,5)-- (2,5) .. controls +(-1,0) and +(1,0) .. (-2,3) -- (-10,3) -- cycle;

\draw[thick, color  = white] (-10,5) -- (-2,5) .. controls +(1,0) and +(-1,0) .. (2,3) -- (10,3) -- (10,5)-- (2,5) .. controls +(-1,0) and +(1,0) .. (-2,3) -- (-10,3) -- cycle;

\draw[thick] (-10,5) -- (-2,5) .. controls +(1,0) and +(-1,0) .. (2,3) -- (10,3);
\draw[thick] (10,5)-- (2,5) .. controls +(-1,0) and +(1,0) .. (-2,3) -- (-10,3);

\draw[pattern = north east lines, pattern color  = red] (-10,1) -- (-2,1) .. controls +(1,0) and +(-1,0) .. (2,-1) -- (10,-1) -- (10,1)-- (2,1) .. controls +(-1,0) and +(1,0) .. (-2,-1) -- (-10,-1) -- cycle;

\draw[thick, color  = white] (-10,1) -- (-2,1) .. controls +(1,0) and +(-1,0) .. (2,-1) -- (10,-1) -- (10,1)-- (2,1) .. controls +(-1,0) and +(1,0) .. (-2,-1) -- (-10,-1) -- cycle;

\draw[thick] (-10,1) -- (-2,1) .. controls +(1,0) and +(-1,0) .. (2,-1) -- (10,-1);
\draw[thick] (10,1)-- (2,1) .. controls +(-1,0) and +(1,0) .. (-2,-1) -- (-10,-1);

\node at (-0.8,-1.5) {$\pm$};
\node at (-0.8,2.5) {$\pm$};

\draw[dashed, red] (0,6) -- (0,-2);

\begin{scope}[xshift = 15cm]
\node at (0,2) {$\longleftrightarrow$};
\node at (0,3.2) {$S_4$};
\end{scope}

\begin{scope}[shift = {+(30,0)}]
\draw[pattern = north east lines, pattern color  = red] (-10,1) -- (-2,5) .. controls +(1,0) and +(-1,0) .. (2,3)  .. controls +(1,0) and +(-1,0) .. (10,-1) -- (10,1) .. controls +(-1,0) and +(1,0) .. (2,5) .. controls +(-1,0) and +(1,0) .. (-2,3)  .. controls +(-1,0) and +(1,0) .. (-10,-1) -- cycle;

\draw[thick, color  = white] (-10,1) -- (-2,5) .. controls +(1,0) and +(-1,0) .. (2,3)  .. controls +(1,0) and +(-1,0) .. (10,-1) -- (10,1) .. controls +(-1,0) and +(1,0) .. (2,5) .. controls +(-1,0) and +(1,0) .. (-2,3)  .. controls +(-1,0) and +(1,0) .. (-10,-1) -- cycle;

\draw[thick] (-10,1) -- (-2,5) .. controls +(1,0) and +(-1,0) .. (2,3)  .. controls +(1,0) and +(-1,0) .. (10,-1); 

\draw[thick] (10,1) .. controls +(-1,0) and +(1,0) .. (2,5) .. controls +(-1,0) and +(1,0) .. (-2,3)  .. controls +(-1,0) and +(1,0) .. (-10,-1);

\pgfsetlinewidth{10*\pgflinewidth}
\draw[white] (-10,5)  .. controls +(1,0) and +(-1,0) .. (-2,1) .. controls +(1,0) and +(-1,0) .. (2,-1)  .. controls +(1,0) and +(-1,0) .. (10,3) -- (10,5)  .. controls +(-1,0) and +(1,0) .. (2,1) .. controls +(-1,0) and +(1,0) .. (-2,-1) -- (-10,3) -- cycle;
\pgfsetlinewidth{.1*\pgflinewidth}

\draw[pattern = north east lines, pattern color  = red] (-10,5)  .. controls +(1,0) and +(-1,0) .. (-2,1) .. controls +(1,0) and +(-1,0) .. (2,-1)  .. controls +(1,0) and +(-1,0) .. (10,3) -- (10,5)  .. controls +(-1,0) and +(1,0) .. (2,1) .. controls +(-1,0) and +(1,0) .. (-2,-1) -- (-10,3) -- cycle;

\draw[thick, color  = white] (-10,5)  .. controls +(1,0) and +(-1,0) .. (-2,1) .. controls +(1,0) and +(-1,0) .. (2,-1)  .. controls +(1,0) and +(-1,0) .. (10,3) -- (10,5)  .. controls +(-1,0) and +(1,0) .. (2,1) .. controls +(-1,0) and +(1,0) .. (-2,-1) -- (-10,3) -- cycle;

\draw[thick] (-10,5)  .. controls +(1,0) and +(-1,0) .. (-2,1) .. controls +(1,0) and +(-1,0) .. (2,-1)  .. controls +(1,0) and +(-1,0) .. (10,3);

\draw[thick] (10,5)  .. controls +(-1,0) and +(1,0) .. (2,1) .. controls +(-1,0) and +(1,0) .. (-2,-1) -- (-10,3);

\draw[fill,white] (6,2.92) -- (7.5,2) -- (6,1.07) -- (4.4,2)-- cycle;

\draw[fill,white] (-6,2.92) -- (-4.5,2) -- (-6,1.07) -- (-7.8,2)-- cycle;

\node at (-0.8,-1.5) {$\pm$};
\node at (-0.8,2.5) {$\pm$};
\draw[dashed, red] (0,6) -- (0,-2);

\end{scope}

\end{tikzpicture}
		\caption{The choice of coloring for the $S4$ move.}
		\label{f:S4}
	\end{figure}
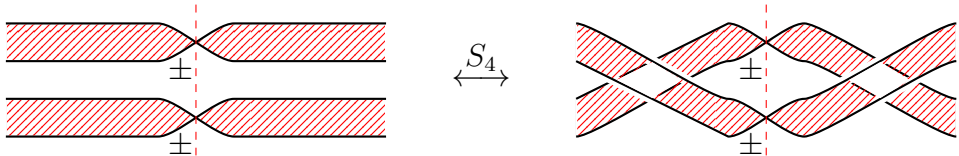
	There is a number of possible cases, depending on the types of crossings on the axis and 
	whether the two top strands go over or under the two bottom strands. 
	As illustrated in Figure~\ref{f:S4}, we now consider all configurations of crossings on the 
	axis simultaneously and we assume that the two top strands go over the two bottom strands.
	It is easy to check that the graphs $\Ga_1$ and $\Ga_2$ associated to the diagrams 
	of Figure~\ref{f:S4} are locally as in Figure~\ref{f:S4-graphs}, where 
	$\ep_1, \ep_2 \in \{ +,-\}$.
	\begin{figure}[ht]
		\begin{tikzpicture}[scale = 0.9, every path/.style ={very thick},
every node/.style={inner sep = 0pt}]
\node (a) at (-2,1) {\small$\bullet$};
\node (d) at (2,1)  {\small$\bullet$};
\node (b) at (-2,-1) {\small$\bullet$};
\node (c) at (2,-1)  {\small$\bullet$};
\node at (-2.4,1) {$a$};
\node at (2.4,1)  {$d$};
\node at (-2.4,-1) {$b$};
\node at (2.4,-1)  {$c$};
\draw[dashed] (d) -- (a);
\draw[dashed] (b) -- (c);

\node[above] at (0,1.2) {$\ep_1$};
\node[below] at (0,-1.2)  {$\ep_2$};

\node at (-3.5,0)  {$\Gamma_1 = $};

\begin{scope}[xshift = 9cm]
\node (a) at (-2,1) {\small$\bullet$};
\node (d) at (2,1)  {\small$\bullet$};
\node (b) at (-2,-1) {\small$\bullet$};
\node (c) at (2,-1)  {\small$\bullet$};
\node (x) at (-.75,1) {\small$\bullet$};
\node (t) at (.75,1)  {\small$\bullet$};
\node (y) at (-.75,-1) {\small$\bullet$};
\node (z) at (.75,-1)  {\small$\bullet$};
\node at (-2.4,1) {$a$};
\node at (2.4,1) {$d$};
\node at (-2.4,-1) {$b$};
\node at (2.4,-1)  {$c$};

\node at (-0.75,1.4) {$x$};
\node at (0.75,1.4)  {$t$};
\node at (-0.75,-1.4) {$y$};
\node at (0.75,-1.4)  {$z$};
\draw[dashed] (x) -- (t);
\draw[dashed] (z) -- (y);
\draw (x) -- (y);
\draw (z) -- (t);
\draw (x) -- (a);
\draw (z) -- (c);
\draw (b) -- (y);
\draw (d) -- (t);
\draw (b) -- (a);
\draw (d) -- (c);

\node at (0,1.3) {$\ep_2$};
\node at (0,-1.35)  {$\ep_1$};

\node[above] at (-1.375,1.1) {$+$};
\node[below] at (-1.375,-1.1)  {$+$};
\node[above] at (1.375,1.1) {$-$};
\node[below] at (1.375,-1.1)  {$-$};

\node at (1.7,0) {$+$};
\node at (0.45,0)  {$+$};
\node at (-2.3,0) {$-$};
\node at (-1.1,0)  {$-$};

\node at (-3.5,0)  {$\Gamma_2 = $};
\end{scope}
\end{tikzpicture}
		\caption{Local change of the medial graph under an $S4$ move}
		\label{f:S4-graphs}
	\end{figure}
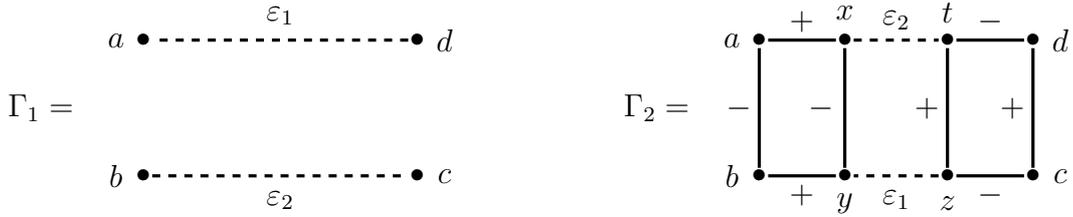
	As we now explain, Figure~\ref{f:S4-proof} contains the proof that the graphs $\Ga_1$ 
	and $\Ga_2$ have equal invariants $I_{\widehat M}$. 
	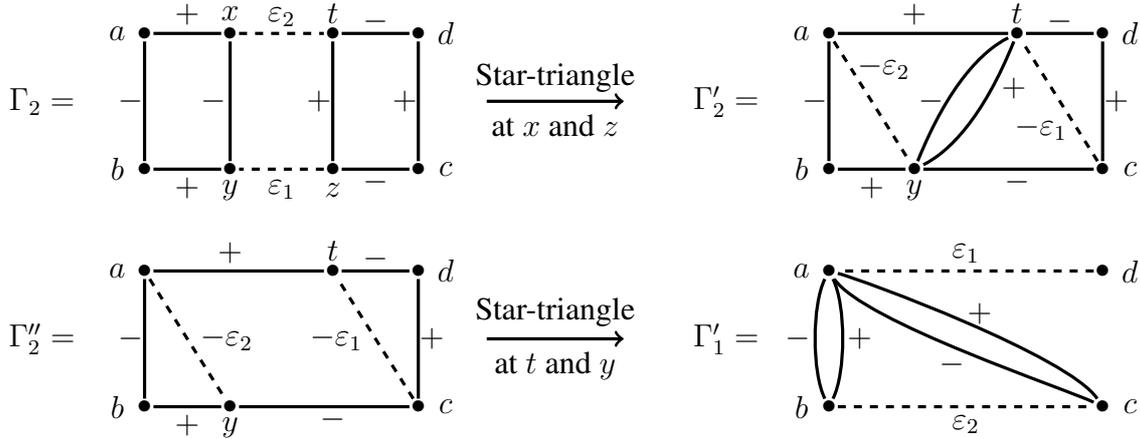
\begin{figure}[ht]
		\begin{tikzpicture}[scale = 0.9, every path/.style ={very thick},
every node/.style={inner sep = 0pt}]

\begin{scope}[yshift = 3.5cm]
\node (a) at (-2,1) {\small$\bullet$};
\node (d) at (2,1)  {\small$\bullet$};
\node (b) at (-2,-1) {\small$\bullet$};
\node (c) at (2,-1)  {\small$\bullet$};
\node (x) at (-.75,1) {\small$\bullet$};
\node (t) at (.75,1)  {\small$\bullet$};
\node (y) at (-.75,-1) {\small$\bullet$};
\node (z) at (.75,-1)  {\small$\bullet$};
\node at (-2.4,1) {$a$};
\node at (2.4,1)  {$d$};
\node at (-2.4,-1) {$b$};
\node at (2.4,-1)  {$c$};

\node at (-0.75,1.3) {$x$};
\node at (0.75,1.3)  {$t$};
\node at (-0.75,-1.3) {$y$};
\node at (0.75,-1.3)  {$z$};
\draw[dashed] (x) -- (t);
\draw[dashed] (z) -- (y);
\draw (x) -- (y);
\draw (z) -- (t);
\draw (x) -- (a);
\draw (z) -- (c);
\draw (b) -- (y);
\draw (d) -- (t);
\draw (b) -- (a);
\draw (d) -- (c);

\node at (0,1.25) {$\ep_2$};
\node at (0,-1.3)  {$\ep_1$};

\node[above] at (-1.375,1.1) {$+$};
\node[below] at (-1.375,-1.1)  {$+$};
\node[above] at (1.375,1) {$-$};
\node[below] at (1.375,-1)  {$-$};

\node[right] at (1.6,0) {$+$};
\node[left] at (.75,0)  {$+$};
\node[left] at (-2,0) {$-$};
\node[right] at (-1.2,0)  {$-$};

\draw[->] (3,0) -- (5,0);
\node at (4,.35) {Star-triangle};
\node at (4,-.35) {at $x$ and $z$};

\node at (-3.5,0)  {$\Gamma_2 = $};
\end{scope}

\begin{scope}[shift = {+(10,3.5)}]

\node (a) at (-2,1) {\small$\bullet$};
\node (d) at (2,1)  {\small$\bullet$};
\node (b) at (-2,-1) {\small$\bullet$};
\node (c) at (2,-1)  {\small$\bullet$};
\node (t) at (.75,1)  {\small$\bullet$};
\node (y) at (-.75,-1) {\small$\bullet$};
\node at (-2.4,1) {$a$};
\node at (2.4,1)  {$d$};
\node at (-2.4,-1) {$b$};
\node at (2.4,-1)  {$c$};

\node at (0.75,1.3)  {$t$};
\node at (-0.75,-1.3) {$y$};
\draw[dashed] (a) -- (y);

\draw (t) -- (a);
\draw (b) -- (y);
\draw (d) -- (t);
\draw (b) -- (a);
\draw (d) -- (c);

\draw (y) .. controls +(.5,.25) and +(-.5,-1.25) .. (t);
\draw (y) .. controls +(.5,1.25) and +(-.5,-.25) .. (t);
\draw (y) -- (c);
\draw[dashed] (c) -- (t);

\node[above] at (-.75,1.1) {$+$};
\node[below] at (-1.375,-1.1)  {$+$};
\node[above] at (1.375,1) {$-$};
\node[below] at (0.75,-1)  {$-$};

\node[right] at (2,0) {$+$};
\node[left] at (-.3,0)  {$-$};
\node[left] at (-2,0) {$-$};
\node[right] at (.5,0.2) {$+$};
\node at (-1.2,0.5) {$-\ep_2$};
\node at (1.1,-0.4) {$-\ep_1$};

\node at (-3.5,0)  {$\Gamma'_2 = $};
\end{scope}

\node (a) at (-2,1) {\small$\bullet$};
\node (d) at (2,1)  {\small$\bullet$};
\node (b) at (-2,-1) {\small$\bullet$};
\node (c) at (2,-1)  {\small$\bullet$};
\node (t) at (.75,1)  {\small$\bullet$};
\node (y) at (-.75,-1) {\small$\bullet$};
\node at (-2.4,1) {$a$};
\node at (2.4,1)  {$d$};
\node at (-2.4,-1) {$b$};
\node at (2.4,-1)  {$c$};

\node at (0.75,1.3)  {$t$};
\node at (-0.75,-1.3) {$y$};
\draw[dashed] (a) -- (y);

\draw (t) -- (a);
\draw (b) -- (y);
\draw (d) -- (t);
\draw (b) -- (a);
\draw (d) -- (c);

\draw (y) -- (c);
\draw[dashed] (c) -- (t);

\node[above] at (-.75,1.1) {$+$};
\node[below] at (-1.375,-1.1)  {$+$};
\node[above] at (1.375,1) {$-$};
\node[below] at (0.75,-1)  {$-$};

\node[right] at (2,0) {$+$};
\node[left] at (-2,0) {$-$};
\node at (-0.8,0) {$-\ep_2$};
\node at (0.8,0) {$-\ep_1$};

\draw[->] (3,0) -- (5,0);
\node at (4,.4) {Star-triangle};
\node at (4,-.4) {at $t$ and $y$};

\node at (-3.5,0)  {$\Gamma''_2 = $};

\begin{scope}[shift = {+(10,0)}]

\node (a) at (-2,1) {\small$\bullet$};
\node (d) at (2,1)  {\small$\bullet$};
\node (b) at (-2,-1) {\small$\bullet$};
\node (c) at (2,-1)  {\small$\bullet$};
\node at (-2.4,1) {$a$};
\node at (2.4,1)  {$d$};
\node at (-2.4,-1) {$b$};
\node at (2.4,-1)  {$c$};

\draw[dashed] (b) -- (c);

\draw (c) .. controls +(-.5,.75) and +(.5,-.1333) .. (a);
\draw (c) .. controls +(-.5,.25) and +(.5,-.75) .. (a);

\draw[dashed] (d) -- (a);

\node[above] at (0,1.1) {$\ep_1$};
\node[below] at (0,-1.1) {$\ep_2$};

\node[left] at (-2.25,0) {$-$};
\node[right] at (-1.75,0) {$+$};

\node[above right] at (0,0.2)  {$+$};
\node[below left] at (0,-0.2)  {$-$};
\draw (a) .. controls +(-.25,-.5) and +(-.25,.5) .. (b);
\draw (a) .. controls +(.25,-.5) and +(.25,.5) .. (b);

\node at (-3.5,0)  {$\Gamma'_1 = $};
\end{scope}
\end{tikzpicture}
		\caption{Invariance of $I_{\widehat{M}}$ under the $S4$ move}
		\label{f:S4-proof}
	\end{figure}
	Indeed, the upper part of Figure~\ref{f:S4-proof} describes the application to $\Ga_2$ of  
	two star-triangle identities from Figure~\ref{f:ST2},   
	resulting in the graph $\Ga'_2$. Note that, in view of Remark~\ref{r:star-triangle-graph},
	we do not need to keep track of the axis but only of the graphs and their distinguished 
	edges. Equation~\eqref{e:type-II} allows us to cancel 
	the two edges of $\Ga'_2$ connecting the vertices $t$ and $y$, 
	obtaining the graph $\Ga''_2$. The lower part of Figure~\ref{f:S4-proof} 
	shows how two more star-triangle identities can be applied to $\Ga''_2$  
	to obtain the graph $\Ga'_1$. After two more edge cancellations we get graph $\Ga_1$. 
	Observe that the vertices labeled 
	$a$ and $b$, as well as those labeled $c$ and $d$, are drawn as if they 
	were distinct, but the proof goes thorough if they coincide. 
	This concludes the argument in the cases when the two top strands go over the 
	two bottom strands. For the other cases 
	the argument is essentially the same, and therefore omitted. 
\end{proof} 

\subsection{Invariance under the $S2(v)$ moves}\label{ss:S2v}

The following result concludes the proof of the second part of Theorem~\ref{t:refined-invariance}. 

\begin{prop}\label{p:S2vinv}
	Let $\widehat{M}$ be  a refined spin model of type II, and let $D$, $D'$ be two oriented, symmetric union link diagrams. If $D'$ is obtained from $D$ by applying an $S2(v)$ move, then  
	\[
	I_{\widehat{M}}(D') = I_{\widehat{M}}(D).
	\]
\end{prop}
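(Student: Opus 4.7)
The plan is to mirror the proof of Proposition \ref{p:S2(pm)} essentially verbatim, with the role played there by Equation \eqref{e:type-II}, namely $W^+\circ W^-=J$, now played by the analogous identity $V^+\circ V^-=J$. This identity is exactly the type II hypothesis on $V^+$, and that hypothesis is the only part of the statement that distinguishes Proposition \ref{p:S2vinv} from Proposition \ref{p:S2(pm)}, which is a strong hint that the proofs should be parallel.

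First I would invoke Corollary \ref{c:color-indep} to reduce to the case of a convenient checkerboard coloring. Inspecting the local picture for an $S2(v)$ move (the left-hand side of Figure \ref{f:extrasRm}), one can choose the coloring so that the induced local change in the medial graph is the analogue of the one depicted in Figure \ref{f:S2(-)}, except that the two new parallel edges of opposite signs $+$ and $-$ now both lie in $\Gamma_B^1$, i.e.~both correspond to on-axis crossings, rather than being off-axis. Consequently, for every assignment of values $a,b\in X$ to the two new vertices, the two distinguished parallel edges contribute a factor $V^+(a,b)\,V^-(a,b)$ to the local weight; the type II hypothesis gives $V^+(a,b)\,V^-(a,b)=J(a,b)=1$, so these two edges (and, after contraction along the remaining off-axis edges attached to the bigon, the two extra vertices as well) can be deleted without changing $Z_{\widehat{M}}$. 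This is the exact analogue of the step in Proposition \ref{p:S2(pm)} where the off-axis bigon is collapsed via $W^+\circ W^-=J$.

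The final bookkeeping goes through automatically. Exactly as in a Reidemeister~II move, the two on-axis crossings removed by the $S2(v)$ move have opposite signs, so $p_B(D')=p_B(D)-1$ and $n_B(D')=n_B(D)-1$; the compensating factor $\alpha_{V^+}\,\alpha_{V^-}$ in the normalization matches the contribution produced by the type II cancellation together with the change in the $d^{-N}$ prefactor, just as in the proof of Proposition \ref{p:S2(pm)}. The only step requiring any care is verifying that some choice of coloring really does produce two parallel on-axis edges of opposite signs between the same pair of new vertices; once this local identification is made, the type II identity for $V^+$ finishes the argument, and the case in which the two overstrands are replaced by two understrands is handled identically.
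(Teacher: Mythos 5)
Your proposal is correct and follows essentially the same route as the paper: choose the checkerboard coloring so that the $S2(v)$ move only adds two parallel on-axis edges of opposite signs between the same pair of vertices, cancel their contribution to $Z_{\widehat{M}}$ via $V^+\circ V^-=J$, and absorb the shift $p_B,n_B\mapsto p_B+1,n_B+1$ using $\al_{V^+}\al_{V^-}=V^+(a,a)V^-(a,a)=1$. The only inaccuracy is your mention of ``two new vertices'' and a subsequent contraction: with the coloring the paper uses one has $\Ga^0_{D'}=\Ga^0_D$, the two extra edges join two \emph{pre-existing} vertices, and no contraction (nor any change in the $d^{-N}$ prefactor) is needed.
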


\begin{proof} 
The proof is very simple. Suppose that $D$ and $D'$ are the diagrams shown on the 
right-hand side of Figure~\ref{f:extrasRm}, with $D'$ having two more crossings 
on the axis. It is clear that we can choose 
the colorings so that $\Ga^0_{D'} = \Ga^0_D$ and $\Ga_{D'}$ 
has two more edges on the axis with opposite signs, connecting the same two vertices. 
The fact that $\widehat{M}$ is of type II implies that 
$Z_{\widehat{M}}(D')=Z_{\widehat{M}}(D)$ and the fact that the two extra crossings 
of $D'$ have opposite signs gives $I_{\widehat{M}}(D')=I_{\widehat{M}}(D)$. 	
\end{proof} 

\section{Applications}\label{s:applications}

\subsection{Refined Potts models and the $10_{42}$ diagrams}\label{ss:Potts1042}

Consider a Potts model $M = (W^+_{\rm Potts},d)$ as in Example~$(1)$ from  Subsection~\ref{ss:spinmodels} for $n=3$. Recall that 
\[
\quad W^+_{\rm Potts} = (-\xi^{-3})I + \xi (J-I),
\]
where $d = -\sqrt{3} = -\xi^2-\xi^{-2}$. By Remark~\ref{r:refined-typeII-exist} we have 
$I,J\in N_W$.  
Therefore, any matrix of the form $V^+_{a,b} = aI+b(J-I)$, $a,b\in\bC$ is symmetric and belongs 
to $N_W$. Since $\psi(I)=J$ and therefore $\psi(J) = \psi^2(I) = n I$, we have 
\[
\Psi(V^+_{a,b})= (a+2b) I + (a-b) (J-I) = d V^-.
\]
Hence, if $a(a+2b)\neq 0$ we have a refined spin model of the form 
$\widehat M_{a,b} = (W^+_{\rm Potts},V^+_{a,b},d)$. Let $D_{10_{42}}$ (respectively $D'_{10_{42}}$) 
the central (respectively right-most) symmetric union diagram of Figure~\ref{f:89}. 
A computation with Sage~\cite{T17a} gives 
\[
I_{\widehat M_{a,b}}(D_{10_{42}}) = d\frac{a^3+6 a^2 b + 2 b^3}{a(a +2b)^2}\quad\text{and}\quad
I_{\widehat M_{a,b}}(D'_{10_{42}}) = d\frac{3a}{a+2b}.
\]
Clearly, for infinitely many choices of $(a,b)$ with $a(a+2b)\neq 0$ we have 
\[
I_{\widehat M_{a,b}}(D_{10_{42}})\neq I_{\widehat M_{a,b}}(D'_{10_{42}}),
\] 
and applying Theorem~\ref{t:refined-invariance} we conclude that 
$D_{10_{42}}$ and $D'_{10_{42}}$ are not symmetrically equivalent. This gives 
a partial answer to the question left open by Eisermann and Lamm and described 
at the end of Subsection~\ref{ss:rJp}.~\footnote{All the refined spin models of type II that we were able to use had normalized partition functions which took the same values on $D_{10_{42}}$ and $D'_{10_{42}}$. However, we do not know how relevant this information is for the question whether $D_{10_{42}}$ and $D'_{10_{42}}$ are weakly symmetrically equivalent. In fact, on the one hand, we could not perform a large amount of calculations because their intensity grew very quickly with the size of model. On the other hand, at the time of writing there is no general classification of spin models, therefore some newly discovered spin model could work in the future.}

\subsection{Refined pentagonal models and the $8_9$ diagrams}\label{ss:pentagonal}

Now we consider the pentagonal spin model of Example~(2) from Subsection~\ref{ss:spinmodels}. 
We want to define a refined spin model of the form 
\[
\widehat{M}_{\rm pent} = (W^+_{\rm pent},V^+,d),
\]
where 
\[
W^+_{\rm pent} = I + \om A_1 + \om^4 A_2,\quad
\om = e^{2\pi i/5},\quad
d = 
\sqrt{5}
\]
and 
\[
A_1 = \sm{0&1&0&0&1\\1&0&1&0&0\\0&1&0&1&0\\0&0&1&0&1\\1&0&0&1&0},\quad
A_2 = \sm{0&0&1&1&0\\0&0&0&1&1\\1&0&0&0&1\\1&1&0&0&0\\0&1&1&0&0}.
\]
It is easy to check that both $A_1$ and $A_2$ belong to the Nomura algebra $N_W$. If we 
let $V^+ = a I + b A_1 + c A_2\in N_W$, we need to check for which $a,b,c\in\bC$ 
we have $\al_{V^+}\cdot\al_{V^-}\neq 0$. Clearly $\al_{V^+} = a$ and, since 
$V^+ Y^{W^+}_{aa} = a + 2b + 2c$, we have $\al_{V^-} = (a+2b+2c)/d$. 
Therefore $\widehat{M}_{\rm pent} = (W^+_{\rm pent},V^+,d)$ 
is a refined spin model for every $a,b,c\in\bC$ such that $a(a+2b+2c)\neq 0$. 
Let $D_{8_9}$ be the left-most diagram of Figure~\ref{f:89} and $D'_{8_9}$ the diagram obtained from $D_{8_9}$ by switching all the crossings on the axis. 
A computation with Sage~\cite{T17a} yields 
\[
I_{\widehat{M}_{\rm pent}}(D_{8_9}) = 
d[a(a^2+2ab+2ac+2b^2+2c^2)+(d-1)(b^3+c^3)-(d+1)bc(b+c)]/a^2(a+2b+2c)
\]
and
\begin{align*}
	I_{\widehat{M}_{\rm pent}}(D'_{8_9}) = d [a^2(a+6b+6c) + 2(d+1)a(b^2+c^2)+ 
	(3-d)(b^3+c^3) + 4(1-d)abc +\\ (d-1)bc (b + c)]/a(a+2b+2c)^2.
\end{align*}
In particular, 
\[
I_{\widehat{M}_{\rm pent}}(D_{8_9})_{a=1,\ c=-b} = d(4b^2+1) \neq 
I_{\widehat{M}_{\rm pent}}(D'_{8_9}) _{a=1,\ c=-b} = 40b^2+d
\]
which, by Theorem~\ref{t:refined-invariance}, implies that the diagrams $D_{8_9}$ and $D'_{8_9}$ are not symmetrically equivalent. In fact, if we choose $\xi\in\bC$ such that $\xi^2 = (1-d)/2$ and we set $b=c=\xi\in\bC$ and $a=-\xi^{-3}$, we have $d = - \xi^2 - \xi^{-2}$ and we obtain a Potts-refined spin model (see Remark~\ref{r:refined-typeII-exist}). Substituting these values of $a,b$ and $c$ we get 
\[
I_{\widehat{M}_{\rm pent}}(D_{8_9})_{a=-\xi^{-3},\ b=c=\xi} = -5d+10 \neq 
I_{\widehat{M}_{\rm pent}}(D'_{8_9})_{a=-\xi^{-3},\ b=c=\xi} = -5d-10. 
\]
This shows that $D_{8_9}$ and $D'_{8_9}$ are not weakly symmetrically 
equivalent. 

\subsection{Infinitely many symmetrically inequivalent diagrams}\label{ss:infinitenonequiv}

Eisermann and Lamm~\cite[\S 2.5]{Ei.La07} defined the connected sum between two symmetric union 
diagrams $D$ and $D'$ by putting $D$ above $D'$ along the axis $B$ and then 
symmetrically joining a strand of $D$ transverse to $B$ to a strand 
of $D'$ transverse to $B$. They showed that this results in 
an associative operation which is well--defined on weakly symmetric equivalence classes and denoted 
the connected sum of the symmetric union diagrams $D$ and $D'$ by $D\# D'$. 
Up to applying $S3$ and $S2(h)$ moves, one may always assume that the strands of $D$ of $D'$ 
used for the operation are, respectively, at the very bottom of $D$ and at the very top 
of $D'$ (see~\cite[Fig.~17]{Ei.La07}). Proposition~\ref{p:gluing} below, whose proof will be provided in~Section~\ref{s:gluing}, allows us to establish Theorem~\ref{t:connectedsums} below, which easily implies the existence of infinitely many pairs of symmetrically inequivalent but Reidemeister equivalent symmetric union 
diagrams. 

We need one more definition before we can state Proposition~\ref{p:gluing}. 
View a complex $n\x n$ matrix $A\in \Mat$ as a map $A\co X\x X\to\bC$ with $X=\{1,\ldots, n\}$, and 
let $t\co X\to X$ be the `shift' map given by $t(a) = a+1\mod n$ for each $a\in X$.  
We say that a refined spin model $\widehat{M} = (W^+,V^+,d)$ is {\em translation-invariant} if 
\[
W^\pm(t(a),t(b)) = W^\pm(a,b)\quad\text{and}\quad
V^\pm(t(a),t(b)) = V^\pm(a,b)
\]
for each $a,b\in X$.
\begin{prop}\label{p:gluing}
	Let $\widehat M$ be a translation-invariant, refined spin model and let $D$, $D_1$ and $D_2$ be 
	oriented, symmetric union link diagrams. Suppose that $\Ga_D=\Ga_{D_1}\cup\Ga_{D_2}$, 
	where $\Ga_{D_1}$ and $\Ga_{D_2}$ are subgraphs of $\Ga_D$ intersecting in a single vertex $v_0$. 
	Then, 
	\[
	I_{\widehat{M}}(D) =  \dfrac1d I_{\widehat{M}}(D_1) I_{\widehat{M}}(D_2).
	\]
\end{prop}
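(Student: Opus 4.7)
The plan is to factor the sum defining $Z_{\widehat{M}}(D)$ according to the value of a coloring at the shared vertex $v_0$, and then invoke translation invariance to show that the resulting partial sums are independent of that value.

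Let $N_i = |\Ga_{D_i}^0|$ for $i=1,2$, so that $N := |\Ga_D^0| = N_1 + N_2 - 1$ by hypothesis. For each $x \in X$ and $i=1,2$, I would define
\[
F_i(x) = \sum_{\substack{\si_i\co \Ga_{D_i}^0 \to X \\ \si_i(v_0) = x}} \prod_{e \in \Ga_B^1 \cap \Ga_{D_i}^1} V^{s(e)}(\si_i(v_e), \si_i(w_e)) \prod_{e \in \Ga_{D_i}^1 \setminus \Ga_B^1} W^{s(e)}(\si_i(v_e), \si_i(w_e)).
\]
Since $\Ga_{D_1}$ and $\Ga_{D_2}$ share only the vertex $v_0$, the edge sets of $\Ga_{D_1}$ and $\Ga_{D_2}$ partition $\Ga_D^1$, and a coloring $\si\co\Ga_D^0 \to X$ is determined by the pair of its restrictions to $\Ga_{D_1}^0$ and $\Ga_{D_2}^0$, which must agree on $v_0$. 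Grouping colorings by the common value $x = \si(v_0)$ and separating the products accordingly yields
\[
Z_{\widehat{M}}(D) = d^{-N} \sum_{x \in X} F_1(x) F_2(x).
\]

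The next step is the translation-invariance argument. The map $\si_i \mapsto t\circ\si_i$ is a bijection on colorings $\Ga_{D_i}^0\to X$ which sends those with $\si_i(v_0)=x$ to those with value $t(x)$ at $v_0$; by the hypothesis that both $W^\pm$ and $V^\pm$ are preserved under the coordinate-wise action of $t$, every edge weight is unchanged, so $F_i(t(x)) = F_i(x)$. Since $t$ acts transitively on $X$, the function $F_i$ is constant; denote its common value by $F_i$. Applying the same identity to $Z_{\widehat{M}}(D_i)$ itself gives
\[
Z_{\widehat{M}}(D_i) = d^{-N_i} \sum_{x \in X} F_i(x) = d^{-N_i} n F_i = d^{2-N_i} F_i,
\]
where the last equality uses $d^2 = n$. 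Solving for $F_i$ and substituting back produces
\[
Z_{\widehat{M}}(D) = d^{2-N} F_1 F_2 = d^{2-N + (N_1-2) + (N_2-2)} Z_{\widehat{M}}(D_1) Z_{\widehat{M}}(D_2) = d^{-1} Z_{\widehat{M}}(D_1) Z_{\widehat{M}}(D_2),
\]
using $N = N_1 + N_2 - 1$.

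It then remains to pass from $Z_{\widehat{M}}$ to the normalized $I_{\widehat{M}}$, which is pure bookkeeping: the crossings on the axis of $D$ are in bijection with the disjoint union of those of $D_1$ and $D_2$, so $p_B(D) = p_B(D_1) + p_B(D_2)$ and $n_B(D) = n_B(D_1) + n_B(D_2)$; consequently the normalization factors $\al_{V^+}^{-p_B}\al_{V^-}^{-n_B}$ multiply across the decomposition, giving the claimed formula. The main conceptual point, and the only place where translation invariance is essential, is the constancy of $F_i(x)$ in $x$; I do not foresee any other obstacle, the remaining work being the routine manipulation of the powers of $d$ traced above.
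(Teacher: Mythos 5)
Your proposal is correct and follows essentially the same route as the paper: your $F_i(x)$ is exactly the paper's restricted sum $R_{\widehat{M}}(D_i,v_0;x)$, your constancy-in-$x$ argument via the bijection $\si\mapsto t\circ\si$ is precisely the paper's Lemma on translation invariance, and the factorization at $v_0$ together with the power-of-$d$ and $p_B,n_B$ bookkeeping matches the paper's concluding computation. No gaps.
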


\begin{thm}\label{t:connectedsums}
Let $D$, $D'$ be Reidemeister equivalent, oriented symmetric union link diagrams. If 
\[
I_{\widehat{M}}(D) \neq I_{\widehat{M}}(D')
\]
for some translation-invariant refined spin 
model $\widehat{M}$, then for infinitely many $k\geq 1$ the connected sums 
\[
\#^k D = D\#\stackrel{(\text{$k$ times})}{\cdots}\# D \quad\text{and}\quad
\#^k D' = D'\#\stackrel{(\text{$k$ times})}{\cdots}\# D'
\]
are Reidemeister equivalent but not symmetrically equivalent. 
If $\widehat{M}$ is of type II, then $\#^k D$ and $\#^k D'$ are not weakly symmetrically equivalent. 
Moreover, the same conclusions hold for each $k\geq 1$ if either  
$I_{\widehat{M}}(D) = \lambda I_{\widehat{M}}(D')$ or 
$I_{\widehat{M}}(D') = \lambda I_{\widehat{M}}(D)$, 
where $\lambda\in\bR_{\geq 0}$.  
\end{thm}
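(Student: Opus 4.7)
My plan is to combine the multiplicativity supplied by Proposition~\ref{p:gluing} with the invariance content of Theorem~\ref{t:refined-invariance} and a short observation about powers of complex numbers. First, I would arrange each iterated connected sum $\#^k D$ so that its medial graph decomposes as a chain of $k$ copies of $\Ga_D$, with consecutive copies sharing exactly one vertex on the axis; using the placement described in~\cite[Fig.~17]{Ei.La07} together with $sR$-, $S2(h)$- and $S3$-moves (which by Propositions~\ref{p:R1S2(h)inv} and~\ref{p:S3inv} leave $I_{\widehat M}$ unchanged), this is a routine reduction. A straightforward induction on $k$, applying Proposition~\ref{p:gluing} at each step, then yields
\[
I_{\widehat M}(\#^k D) \;=\; d^{-(k-1)}\, I_{\widehat M}(D)^k,
\]
and analogously for $D'$.

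Since $D$ and $D'$ are Reidemeister equivalent, so are $\#^k D$ and $\#^k D'$. By Theorem~\ref{t:refined-invariance}, to distinguish them up to symmetric equivalence it therefore suffices to produce infinitely many $k\geq 1$ with $I_{\widehat M}(D)^k \neq I_{\widehat M}(D')^k$. Setting $x := I_{\widehat M}(D)$ and $y := I_{\widehat M}(D')$ with $x\neq y$: if one of $x$, $y$ vanishes then the other does not, and $x^k\neq y^k$ for every $k\geq 1$. Otherwise let $\mu := y/x \in \bC\setminus\{0,1\}$ and observe that $x^k=y^k$ iff $\mu^k=1$; if $\mu$ is not a root of unity this never happens, while if $\mu$ is a primitive $n$-th root of unity it fails precisely when $n\nmid k$. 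In either case infinitely many $k$ work, yielding the first assertion. When $\widehat M$ is of type II, the second clause of Theorem~\ref{t:refined-invariance} upgrades the conclusion from symmetric inequivalence to weak symmetric inequivalence.

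For the ``moreover'' clause, I would assume $y=\lambda x$ (or symmetrically $x=\lambda y$) for some $\lambda\in\bR_{\geq 0}$. The hypothesis $x\neq y$ forces $\lambda\neq 1$; if $\lambda=0$ then exactly one of $x^k$, $y^k$ vanishes for every $k\geq 1$, while if $\lambda>0$ with $\lambda\neq 1$ then $\lambda^k\neq 1$ for every $k\geq 1$. Either way $x^k\neq y^k$ for all $k\geq 1$, as required. The substantive ingredient in the whole argument is Proposition~\ref{p:gluing} itself, whose proof is deferred to Section~\ref{s:gluing}: the translation-invariance hypothesis is what allows one to decouple the two halves of the connected sum along the identified axial vertex and produce the clean factor $1/d$. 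Granting that result, the rest of the proof of Theorem~\ref{t:connectedsums} is essentially formal.
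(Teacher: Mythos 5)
Your argument is correct and follows essentially the same route as the paper: apply Proposition~\ref{p:gluing} iteratively to get $I_{\widehat M}(\#^k D)=d^{-(k-1)}I_{\widehat M}(D)^k$, then observe that coincidence of the invariants for $\#^k D$ and $\#^k D'$ forces the ratio of $I_{\widehat M}(D)$ and $I_{\widehat M}(D')$ to be a $k$-th root of unity, and invoke Theorem~\ref{t:refined-invariance} (with its type~II clause for weak equivalence). The only difference is that you spell out the degenerate cases (a vanishing invariant, $\lambda=0$) which the paper dismisses with ``the statement follows easily,'' so your write-up is, if anything, slightly more complete.
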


\begin{proof}
	Proposition~\ref{p:gluing} applies to triples of the form $D = D_1\# D_2$, $D_1$, 
	$D_2$, where the connected sum is performed using a bottom transverse strand of $D_1$ and 
	a top transverse strand of $D_2$, as explained above. Hence, for each $k\geq 1$ we have
	\[
	I_{\widehat{M}}(\#^k D) = \frac1{d^{k-1}}I_{\widehat{M}}(D)^k,\quad\text{and}\quad
	I_{\widehat{M}}(\#^k D') = \frac1{d^{k-1}}I_{\widehat{M}}(D')^k.
	\]
	Therefore, the equality $I_{\widehat{M}}(\#^k D) = I_{\widehat{M}}(\#^k D')$ implies 
	that $I_{\widehat{M}}(D) = \ze I_{\widehat{M}}(D')$, with $\ze^k=1$, and the statement follows easily. 
\end{proof}

\begin{cor}\label{c:connectedsums}
Let $D_{10_{42}}$, $D'_{10_{42}}$, $D_{8_9}$ and $D'_{8_9}$ the symmetric union diagrams 
considered in Subsections~\ref{ss:Potts1042} and~\ref{ss:pentagonal}. Then, for each $k\geq 1$  
the symmetric union diagrams $\#^k D_{10_{42}}$ and $\#^k D'_{10_{42}}$ are Reidemeister equivalent but not symmetrically equivalent, while $\#^k D_{8_9}$ and $\#^k D'_{8_9}$ are Reidemeister but not weakly symmetrically equivalent. 
\end{cor}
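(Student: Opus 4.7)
The plan is to apply Theorem~\ref{t:connectedsums} to the two pairs of diagrams analyzed in Subsections~\ref{ss:Potts1042} and~\ref{ss:pentagonal}. To invoke the ``moreover'' clause and obtain non-equivalence for every $k\geq 1$ (rather than merely infinitely many), two things need to be checked in each case: that the refined spin model in play is translation-invariant, and that the ratio of the two normalized partition functions is a nonnegative real number distinct from $1$.

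Translation-invariance is essentially a matter of inspection. All matrices entering the refined Potts model $\widehat{M}_{a,b}$ of Subsection~\ref{ss:Potts1042} lie in the span of $I$ and $J$, which are trivially preserved under the simultaneous cyclic shift of rows and columns, so $\widehat{M}_{a,b}$ is translation-invariant for every $(a,b)$. In the refined pentagonal model $\widehat{M}_{\rm pent}$ of Subsection~\ref{ss:pentagonal}, the matrices $A_1$ and $A_2$ encode nearest- and next-to-nearest-neighbour relations in the cyclic group $\bZ/5$ and are therefore invariant under the cyclic shift $t(x)=x+1\bmod 5$; the same property is thus inherited by $W^+_{\rm pent}=I+\om A_1+\om^4 A_2$ and by $V^+=aI+bA_1+cA_2$.

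For the $10_{42}$ case, the simplest instance of the formulas of Subsection~\ref{ss:Potts1042} is $a=1$, $b=0$, which satisfies $a(a+2b)\neq 0$ and yields
\[
I_{\widehat{M}_{1,0}}(D_{10_{42}})=d, \qquad I_{\widehat{M}_{1,0}}(D'_{10_{42}})=3d,
\]
so $I_{\widehat{M}_{1,0}}(D_{10_{42}})=\lambda\, I_{\widehat{M}_{1,0}}(D'_{10_{42}})$ with $\lambda=1/3\in\bR_{\geq 0}\setminus\{1\}$. Theorem~\ref{t:connectedsums} then gives that $\#^k D_{10_{42}}$ and $\#^k D'_{10_{42}}$ are Reidemeister equivalent but not symmetrically equivalent for every $k\geq 1$.

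For the $8_9$ case, I would use the Potts-refined pentagonal model obtained by taking $a=-\xi^{-3}$ and $b=c=\xi$ with $\xi^2+\xi^{-2}=-d$, which is of type II by Remark~\ref{r:refined-typeII-exist}. The values
\[
I_{\widehat{M}_{\rm pent}}(D_{8_9})=10-5d, \qquad I_{\widehat{M}_{\rm pent}}(D'_{8_9})=-10-5d, \qquad d=\sqrt{5}
\]
computed in Subsection~\ref{ss:pentagonal} give, after a brief manipulation using $d^2=5$, the ratio
\[
\frac{I_{\widehat{M}_{\rm pent}}(D_{8_9})}{I_{\widehat{M}_{\rm pent}}(D'_{8_9})}=\frac{\sqrt{5}-2}{\sqrt{5}+2}=(\sqrt{5}-2)^2,
\]
a positive real number clearly distinct from $1$. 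Invoking the type II clause of Theorem~\ref{t:connectedsums} then concludes that $\#^k D_{8_9}$ and $\#^k D'_{8_9}$ are Reidemeister equivalent but not weakly symmetrically equivalent for every $k\geq 1$. There is no genuine obstacle beyond these short algebraic verifications; the real work has already been done in Theorem~\ref{t:refined-invariance}, Proposition~\ref{p:gluing}, and Theorem~\ref{t:connectedsums}.
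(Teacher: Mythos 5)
Your proposal is correct and follows essentially the same route as the paper: specialize to $\widehat{M}_{1,0}$ for the $10_{42}$ pair and to the Potts-refined pentagonal model ($a=-\xi^{-3}$, $b=c=\xi$) for the $8_9$ pair, observe translation invariance, and invoke Theorem~\ref{t:connectedsums} (the type II clause in the second case), using that in each case the two values differ by a nonnegative real factor so the ``moreover'' clause yields the conclusion for every $k\geq 1$. Your explicit checks of translation invariance and of the ratios $\lambda=1/3$ and $\lambda=(\sqrt{5}-2)^2$ are details the paper leaves implicit, but the argument is the same.
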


\begin{proof}
	Let $\widehat{M}_{a,b}$ be the refined spin model defined in Subsection~\ref{ss:Potts1042}. By the calculations given there we have 
	\[
	I_{\widehat M_{1,0}}(D_{10_{42}}) = -\sqrt{3}\quad\text{and}\quad
	I_{\widehat M_{1,0}}(D'_{10_{42}}) = -3\sqrt{3}.
	\]
	Since the Potts model is translation invariant, applying Theorem~\ref{t:connectedsums} 
	we obtain that, for each $k\geq 1$, the diagrams $\#^k D_{10_{42}}$ and $\#^k D'_{10_{42}}$ are not symmetrically equivalent. Similarly, by the results of Subsection~\ref{ss:pentagonal}, 
	if $\xi^2 = (1-\sqrt{5})/2$ we have 
	\[
	I_{\widehat{M}_{\rm pent}}(D_{8_9})_{a=-\xi^{-3},\ b=c=\xi} = 10 - 5\sqrt{5} \quad\text{and}\quad
	I_{\widehat{M}_{\rm pent}}(D'_{8_9})_{a=-\xi^{-3},\ b=c=\xi} = - 10 - 5\sqrt{5}. 
	\]
	As before, since the pentagonal model is translation invariant we may apply Theorem~\ref{t:connectedsums}. Therefore, for each $k\geq 1$ the diagrams $\#^k D_{8_9}$ and $\#^k D'_{8_9}$ are not weakly symmetrically equivalent. 
\end{proof}
 
\section{Proof of Proposition~\ref{p:gluing}}\label{s:gluing}

Recall from Section~\ref{s:spin-models} that, if $\widehat{M} = (W^+,V^+,d)$ is a refined spin model, 
the normalized partition function of a symmetric union diagram $D$ takes the form 
\[
I_{\widehat{M}}(D) = \al_{V^+}^{-p_B(D)}\al_{V^-}^{-n_B(D)} d^{-N} Z_{\widehat{M}}(D),
\]
where $N=|\Ga^0_D|$ and 
\[
Z_{\widehat{M}}(D) = d^{-N} \sum_{\si\co\Ga^0_D\to X} 
\prod_{e\in\Ga_B^1} V^{s(e)}(\si(v_e),\si(w_e))\prod_{e\in\Ga^1_D\setminus\Ga_B^1} W^{s(e)}(\si(v_e),\si(w_e)).
\]
Here we are omitting the coloring from the notation because of Corollary~\ref{c:color-indep}.
Fix a vertex $v_0\in\Ga_D^0$ and an element $a\in X$. Define
\begin{equation}\label{e:}
R_{\widehat{M}}(D,v_0;a) := \sum_{\si\ |\ \si(v_0) = a} 
\prod_{e\in\Ga_B^1} V^{s(e)}(\si(v_e),\si(w_e))\prod_{e\in\Ga^1_D\setminus\Ga_B^1} W^{s(e)}(\si(v_e),\si(w_e)).
\end{equation}
Then, we have 
\[
Z_{\widehat{M}}(D) = d^{-N}\sum_{a\in X} R_{\widehat{M}}(D,v_0;a).
\]

\begin{lemma}\label{l:rel-abs-inv} 
	If $\widehat{M} = (W^+,V^+,d)$ is a translation-invariant refined spin model,  
	\[
	Z_{\widehat{M}}(D) = n d^{-N} R_{\widehat{M}}(D,v_0; a)
	\] 
	for each $v_0\in\Ga^0_D$ and $a\in X$.
\end{lemma}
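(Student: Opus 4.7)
The plan is to show that, under translation invariance, the quantity $R_{\widehat{M}}(D,v_0;a)$ does not depend on the choice of $a\in X$. Once this is established, the statement follows immediately, since
\[
Z_{\widehat{M}}(D) = d^{-N}\sum_{a\in X} R_{\widehat{M}}(D,v_0;a) = nd^{-N} R_{\widehat{M}}(D,v_0;a)
\]
for any fixed $a$, using the identity displayed just before the lemma.

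To prove that $a\mapsto R_{\widehat{M}}(D,v_0;a)$ is constant, I would use the shift map $t\co X\to X$ to produce a bijection on the set of colorings. Concretely, for each coloring $\si\co\Ga^0_D\to X$ with $\si(v_0)=a$, the coloring $t\circ\si$ satisfies $(t\circ\si)(v_0)=t(a)$, and $\si\mapsto t\circ\si$ is a bijection between the sets $\{\si\co\si(v_0)=a\}$ and $\{\si\co\si(v_0)=t(a)\}$. By the translation-invariance of $\widehat{M}$, for every edge $e\in\Ga^1_D$ we have
\[
W^{s(e)}(t\si(v_e),t\si(w_e)) = W^{s(e)}(\si(v_e),\si(w_e))
\]
if $e\in\Ga^1_D\setminus\Ga^1_B$, and similarly for $V^{s(e)}$ if $e\in\Ga^1_B$. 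Therefore the full product of edge weights is preserved by the substitution $\si\leadsto t\circ\si$, giving $R_{\widehat{M}}(D,v_0;a)=R_{\widehat{M}}(D,v_0;t(a))$.

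Since $t$ acts transitively on $X$, iterating yields $R_{\widehat{M}}(D,v_0;a)=R_{\widehat{M}}(D,v_0;b)$ for all $a,b\in X$, which completes the argument. There is no real obstacle: the content of the lemma is exactly the change-of-variables principle that translation invariance was designed to encode.
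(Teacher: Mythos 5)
Your argument is correct and coincides with the paper's own proof: both show that $R_{\widehat{M}}(D,v_0;a)$ is independent of $a$ via the bijection $\si\mapsto t\circ\si$ on colorings, using translation invariance of the edge weights, and then conclude from $Z_{\widehat{M}}(D)=d^{-N}\sum_{a\in X}R_{\widehat{M}}(D,v_0;a)$. No gaps to report.
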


\begin{proof} 
	It suffices to show that $R_{\widehat{M}}(D,v_0;a) = R_{\widehat{M}}(D,v_0; t(a))$ for 
	each $a\in X$, where $t(a) = a+1\mod n$. 
	Let 
	\[
	w(\si,e) = 
	\begin{cases} 
	W^{s(e)}(\si(v_e),\si(w_e))\ \text{if $e$ is off the axis}\\
	V^{s(e)}(\si(v_e),\si(w_e))\ \text{if $e$ is on the axis},
	\end{cases} 
	\]
	where $s(e)\in\{+,-\}$ is the sign of $e$. Since the spin model is translation-invariant, 
	\[
	\sum_{\si\ |\ \si(v_0) = a} \prod_e w (\si, e) = 
	\sum_{\si\ |\ t\circ\si(v_0) = t(a)} \prod_e w (t\circ\si, e) = 
	\sum_{\si\ |\ \si(v_0) = t(a)} \prod_e w (\si, e),
	\]
	which implies the required identity. 
\end{proof}

	Clearly 
	%
	\[
	R_{\widehat{M}}(D,v_0;a) = R_{\widehat{M}}(D_1,v_0;a) R_{\widehat{M}}(D_2,v_0;a)
	\] 
	for each $a\in X$. Let $N=|\Ga_D^0|$, $N_1= |\Ga_{D_1}^0|$ and 
	$N_2=|\Ga_{D_2}^0|$. Then, we have $N=N_1 + N_2 - 1$. Now choose any $x_0\in X$. Since $d^2=n$, $p_{B}(D) = p_{B}(D_1) + p_{B}(D_2)$ and $n_{B}(D) = n_{B}(D_1) + n_{B}(D_2)$, in view of Lemma~\ref{l:rel-abs-inv} we have
	\begin{align*}
	d I_{\widehat{M}}(D)  & =   \al_{V^+}^{-p_B(D)}\al_{V^-}^{-n_B(D)} d^{-N+1} Z_{\widehat{M}}(D) \\
	& = n_B l_{V^+}^{-p_B(D)}\al_{V^-}^{-n_B(D)} d^{-N+1} R_{\widehat{M}}(D,v_0;x_0) \\ 
	& = d^{-N_1} \al_{V^+}^{-p_B(D_1)}\al_{V^-}^{-n_B(D_1)}  n R_{\widehat{M}}(D_1,v_0;x_0)\  
	d^{-N_2} \al_{V^+}^{-p_B(D_2)}\al_{V^-}^{-n_B(D_2)}  n R_{\widehat{M}}(D_2,v_0;x_0) \\
	& = I_{\widehat{M}}(D_1) I_{\widehat{M}}(D_2).
	\end{align*}
This concludes the proof of Proposition~4.1.

\bibliographystyle{abbrv}
\bibliography{biblio}

\end{document}